\DeclareMathOperator{\id}{id}
\DeclareMathOperator{\BV}{BV}
\DeclareMathOperator{\Hom}{Hom}
\DeclareMathOperator{\IBL}{IBL}
\DeclareMathOperator{\Sh}{Sh}
\newtheorem{thm}{Theorem}[section]
\newtheorem{lm}[thm]{Lemma}
\newtheorem{coro}[thm]{Corollary}
\newtheorem{prop}[thm]{Proposition}
\theoremstyle{definition}
\newtheorem{df}[thm]{Definition}
\newtheorem{ex}[thm]{Example}
\theoremstyle{remark}
\newtheorem*{rem}{Remark}
\newtheorem{ack}{Acknowledgments}
\def\BVi{\BV_\infty}
\def\IBLi{\IBL_\infty}
\def\Li{L_\infty}
\def\Ai{A_\infty}
\def\Z{\mathbb Z}
\def\g{\mathfrak g}
\newcommand{\abs}[1]{{\lvert#1\rvert}}
\begin{document}

\title[The BV formalism for $\Li$-algebras]{The BV formalism for $\Li$-algebras}

\author[D. Bashkirov]{Denis Bashkirov}
\email{bashk003@umn.edu}
\address {School of Mathematics\\University of Minnesota\\
  Minneapolis, MN 55455, USA}

\author[A. A. Voronov]{Alexander A. Voronov} \email{voronov@umn.edu}
\address {School of Mathematics\\University of Minnesota\\
  Minneapolis, MN 55455, USA, and Kavli IPMU (WPI), University of
  Tokyo, Kashiwa, Chiba 277-8583, Japan}

\thanks{This work was supported by the World Premier International
  Research Center Initiative (WPI Initiative), MEXT, Japan, the
  Institute for Mathematics and its Applications with funds provided
  by the National Science Foundation, and a grant from the Simons
  Foundation (\#282349 to A.~V.).}

\date{January 31, 2016}

\begin{abstract}
  Functorial properties of the correspondence between commutative
  $\BVi$-algebras and $\Li$-algebras are investigated. The category of
  $\Li$-algebras with $\Li$-morphisms is characterized as a certain
  category of \emph{pure} $\BVi$-algebras with \emph{pure}
  $\BVi$-morphisms. The functor assigning to a commutative
  $\BVi$-algebra the $\Li$-algebra given by higher derived brackets is
  also shown to have a left adjoint. Cieliebak-Fukaya-Latschev's
  machinery of $\IBLi$- and $\BVi$-morphisms is further developed with
  introducing the logarithm of a map.
\end{abstract}

\maketitle

\section*{Introduction}

One way to approach the notion of a strongly homotopy Lie algebra is
via the language of formal geometry, see
\cite{kontsevich-soibelman}. Namely, it is known that the data of an
$\Li$-algebra $\g$ is equivalent to that of a formal pointed
differential graded (dg) manifold $\g[1]$. The corresponding $\Li$
structure is then encoded in the cofree dg cocommutative coalgebra
$S(\g[1])$ of distributions on $\g[1]$ supported at the basepoint. The
idea of Batalin-Vilkovisky (BV) formalism in physics suggests that it
might be useful to study what the $\Li$ structure looks like from a
Fourier-dual perspective \cite{losev}, namely, the point of view of
the standard dg commutative algebra structure on $S(\g[1])$. In fact,
we show that an $\Li$ structure on $\g$ translates into a special kind
of commutative homotopy Batalin-Vilkovisky ($\BVi$) structure on
$S(\g[-1])$ and, moreover, does it in a functorial way. Geometrically,
we can say that we describe a formal pointed dg manifold $\g[1]$ as a
pointed $\BVi$-manifold $(\g[-1])^*$ of a special kind. This provides
a Fourier-dual, BV alternative to the standard characterization of the
category of $\Li$-algebras as a subcategory of the category of dg
cocommutative coalgebras or formal pointed dg manifolds. In
particular, the \emph{coalgebra codifferential} on $S(\g[1])$ encoding
the structure of an $\Li$-algebra on a graded vector space $\g$ turns
into a \emph{square-zero differential operator with linear
  coefficients} on the \emph{algebra} $S(\g[-1])$. We also show that
the functor that assigns to an $\Li$-algebra $\g$ the $\BVi$-algebra
$S(\g[-1])$ is left adjoint to a ``functor'' that assigns to a
$\BVi$-algebra the $\Li$-algebra given by higher derived
brackets. This fact may be interpreted geometrically as a statement
that the functor $\g[1] \mapsto (\g[-1])^*$ from formal pointed dg
manifolds to pointed $\BVi$-manifolds has a right adjoint.

The correspondence between $\Li$ and $\BVi$ structures that we
establish in the paper is to a large extent motivated by the technique
of higher derived brackets. The origins of higher derived brackets can
be traced back to the iterated commutators of A.~Grothendieck, see
Expos\'e VII$_{\text{A}}$ by P.~Gabriel in \cite{SGA3}, and
J.-L. Koszul \cite{koszul}, used in the algebraic study of
differential operators, though the subject really flourished later in
physics under the name of higher ``antibrackets'' in the works of
J.~Alfaro, I.~A. Batalin, K.~Bering, P.~H. Damgaard and R.~Marnelius
\cite{alfaro-damgaard,batalin-bering-damgaard,batalin-marnelius:q,
  batalin-marnelius:d,batalin-marnelius:g,bering-damgaard-alfaro,
  bering:ncbv} on the BV formalism. A mathematically friendly approach
was developed by F.~Akman's \cite{akman:bv,akman:master} and
generalized further by T.~Voronov \cite{tvoronov:hdb,tvoronov:hdba},
who described $\Li$ brackets derived by iterating a binary Lie bracket
not necessarily given by the commutator. There have been various
versions of higher derived brackets introduced in other contexts, such
as the ternary bracket of \cite{roytenberg-weinstein} for Courant
algebroids and its higher-bracket generalization of
\cite{fiorenza-manetti,getzler:hdb} for dg Lie algebras or the $\Ai$
products of \cite{boerjeson} for dg associative algebras and the
twisted $\Li$ brackets and $\Ai$ products of
\cite{markl:boerjeson}. The notion of a $\BVi$-algebra was studied by
K.~Bering and T.~Lada \cite{bering-lada:examples}, K.~Cieliebak and
J.~Latschev \cite{cieliebak-latschev}, I.~G\'alvez-Carrillo, A.~Tonks
and B.~Vallette \cite{galvez-carrillo-tonks-vallette}, O.~Kravchenko
\cite{kravchenko:bv}, and D.~Tamarkin and B.~Tsygan
\cite{tamarkin-tsygan}. Lately, considerable interest in homotopy
involutive Lie bialgebras, or $\IBLi$-algebras, which are
$\BVi$-algebras of a particular type, has emerged, see
\cite{cieliebak-latschev, cieliebak-fukaya-latschev,muenster-sachs,
  campos-merkulov-willwacher}. $\IBLi$-algebras are also known as
$\Omega(\operatorname{CoFrob})$-algebras of \cite{dc-t-t} and closely
related to quantum (or loop) homotopy Lie algebras of
\cite{zwiebach:intro,markl:loop}.

The BV formalism as a replacement of the dg-coalgebra language seems
to be even more natural for studying Lie-Rinehart pairs $(\g, A)$, see
\cite{huebschmann,vitagliano}.

We review the notion of a $\BVi$-algebra in Section 1 and describe the
$\BVi$ structure on $S(\g[-1])$ in Section 2. A characterization of
$\BVi$-algebras arising this way is presented in Section 3. In Section
4 we prove the first main result of the paper: a description of the
category of $\Li$-algebras as a certain subcategory of the category of
$\BVi$-algebras. Section 5 is dedicated to the second main result, the
adjunction theorem.

\subsection*{Conventions and notation}

We will work over a ground field $k$ of characteristic zero. A
differential graded (dg) vector space $V$ will mean a complex of
$k$-vector spaces with a differential of degree one. The degree of a
homogeneous element $v \in V$ will be denoted by $\abs{v}$. In the
context of graded algebra, we will be using the Koszul rule of signs
when talking about the graded version of notions involving symmetry,
including commutators, brackets, symmetric algebras, derivations,
\emph{etc}., often omitting the modifier \emph{graded}. For any
integer $n$, we define a \emph{translation} (or \emph{$n$-fold
  desuspension$)$} $V[n]$ of $V$: $V[n]^p := V^{n+p}$ for each $p \in
\Z$.  For a dg vector space $V$, will also consider the dg
$k[[\hbar]]$-module $V[[\hbar]]$ of formal power series in a variable
$\hbar$ of degree 2 with values in $V$. We will also sometimes refer
to differential operators of order $\le n$ as differential operators
of order $n$.

\begin{ack}
  The authors are grateful to Maxim Kontsevich, Yvette
  Kosmann-Schwarzbach, Janko Latschev, Martin Markl, Jim Stasheff,
  Daniel Sternheimer, Luca Vitagliano, Theodore Voronov, and the
  anonymous referee for useful remarks. A.~V. also thanks IHES, where
  part of this work was done, for its hospitality.
\end{ack}

\section{Homotopy BV algebras}

We will utilize a strictly commutative version of the notion of a
homotopy BV algebra, also known as a generalized BV algebra, due to
Kravchenko \cite{kravchenko:bv}, which is less general than the
full-fledged homotopy versions of \cite{tamarkin-tsygan} and
\cite{galvez-carrillo-tonks-vallette}. Nevertheless, we will take the
liberty to use the term $\BVi$-algebra, following a trend set by
several authors
\cite{cieliebak-latschev,terilla-tradler-wilson,braun-lazarev,vitagliano}. The
following definition gives a graded version of Grothendieck's notion
of a differential operator in commutative algebra.

\begin{df}
\label{do}
Let $n \ge 0$ be an integer. A $k$-linear operator $D: V \to V$ on a
graded commutative algebra $V$ is a said to be a \emph{differential
  operator of order $\le n$} if for any $n+1$ elements $a_0, \dots,
a_n \in V$, we have
\[
[[ \dots [D, L_{a_0}], \dots], L_{a_n} ] = 0,
\]
where the $L_a$ is the left-multiplication operator
\[
L_a (b) := ab
\]
on $V$ and the bracket $[-,-]$ is the graded commutator of two
$k$-linear operators.
\end{df}

\begin{df}
\label{BVi-df}
  A \emph{$\BVi$-algebra} is a graded commutative algebra $V$ over $k$
  with a $k$-linear map $\Delta: V \to V[[\hbar]]$ of degree one
  satisfying the following properties:
\[
\Delta = \frac{1}{\hbar} \sum_{n=1}^\infty \hbar^n \Delta_n,
\]
where $\Delta_n$ is a differential operator of order (at most) $n$ on
$V$,
\[
\Delta^2 = 0, \qquad \text{and} \qquad \Delta (1) = 0,
\]
The continuous (in the $\hbar$-adic topology), $k[[\hbar]]$-linear
extension of $\Delta$ to $V[[\hbar]]$ will also be denoted $\Delta:
V[[\hbar]] \to V[[\hbar]]$ and called a \emph{$\BVi$ operator}.
\end{df}

Recall that we assumed that $\abs{\hbar} = 2$, thus $\abs{\Delta_1} =
1$, $\abs{\Delta_2} = -1$, and generally, $\abs{\Delta_n} = 3 - 2n$
for $n \ge 1$. Note that $\Delta_1$ will automatically be a
differential in the usual sense, \emph{i.e.}, define the structure of
a dg commutative algebra on $V$. If $\Delta_n = 0$ for $n \ge 3$, we
recover the notion of a \emph{dg BV algebra}, see
\cite{ks:exact,akman:bv,bar-kon,huebschmann, manin:book,
  kravchenko:bv, tamarkin-tsygan}. If moreover $\Delta_1 = 0$, we get
the notion of a \emph{BV algebra}, also known as a
\emph{Beilinson-Drinfeld algebra}, see
\cite{beilinson-drinfeld,gwilliam,costello-gwilliam}. BV algebras
arose as part of the BV formalism in physics. A basic geometric
example of (a $\Z/2\Z$-graded version of) a BV algebra is the algebra
of functions on a smooth supermanifold with an odd symplectic form and
a volume density, see \cite{schwarz:bv,getzler:bv}. An example of such
a supermanifold is the odd cotangent bundle $\Pi T^*M$ of a
(classical, rather than super) smooth manifold $M$ with a volume form,
where $\Pi T^*M$ denotes the translation $T^*[-1]M$ modulo 2 of the
cotangent bundle. A Lie-theoretic version of this example is the
graded symmetric algebra $S(\g[-1])$, also known as the exterior
algebra $\Lambda (\g)$, of a Lie algebra $\g$, with the
Chevalley-Eilenberg differential as $\Delta_2$. We will describe an
$\Li$ generalization of this example is Section~\ref{l-to-bv}. More
generally, one can view the $\BVi$ structure considered in this paper
as a homotopy version of the algebraic structure arising in BV
geometry.

\begin{ex}(\cite{khudaverdian-voronov,braun-lazarev,vitagliano})
\label{dr-koszul}
Let $M$ be a smooth graded manifold and $C^\infty(M, \linebreak[0]
S(T[-1]M)[1])$ be the graded Lie algebra of (global, smooth)
multivector fields on $M$ with respect to the Schouten bracket. When
$M$ is a usual, ungraded manifold, $S(T[-1]M)[1]$ is the
exterior-algebra bundle $\bigwedge TM$, in which a $k$-vector field,
or a section of $\bigwedge^k TM$, has degree $k-1$. A
\textit{generalized Poisson structure} on a graded manifold $M$ is a
multivector field $P$ of degree one such that $[P,P]=0$. A generalized
Poisson structure on $M$ may be expanded as $P=P_0 + P_1+\dots$ with
$P_n\in C^\infty(M, S^{n}(T[-1]M)[1])$. For $n \ge 1$, the generalized
Lie derivative $\Delta_n=[d,\iota_{P_n}]$, where $\iota_{(-)}$ is the
interior product, defines an $n$th-order differential operator of
degree $3 - 2n$ on the de Rham algebra $(\Omega(M),d)$, where
$\Omega(M) := C^\infty(M, S(T^*[-1]M))$. If we assume that $P_{0} = 0$
to avoid differential operators of order zero, then $\Delta = (d +
\Delta_1) + \Delta_2 \hbar+\dots+\Delta_n\hbar^{n-1} + \dots:
\Omega(M)\to \Omega(M)[[\hbar]]$ defines a $\BVi$ structure on
$\Omega(M)$, known as the \emph{de Rham-Koszul} $\BVi$ structure.
\end{ex}

\section{From homotopy Lie algebras to homotopy BV algebras}
\label{l-to-bv}

The construction of this section belongs essentially to C.~Braun and
A.~Lazarev, see \cite[Example 3.12]{braun-lazarev}. Consider an
\emph{$\Li$-algebra} $\g$, \emph{i.e.}, a graded vector space $\g$ and
a codifferential on the cofree graded cocommutative coalgebra
$S(\g[1])$ on $\g[1]$ with respect to the ``shuffle''
comultiplication:
\[
\delta (x_1 \dots x_m) := \sum_{n=0}^m \sum_{\sigma \in \Sh_{n,m-n}}
(-1)^{\abs{x_\sigma}} (x_{\sigma(1)} \dots x_{\sigma(n)}) \otimes
(x_{\sigma(n+1)} \dots x_{\sigma(m)}),
\]
where $x_1, \dots, x_m \in \g[1]$, $\Sh_{n,m-n}$ is the set of
$(n,m-n)$ \emph{shuffles}: permutations $\sigma$ of $\{1,2, \dots,
m\}$ such that $\sigma(1) < \sigma(2) < \dots < \sigma (n)$ and
$\sigma(n+1) < \sigma(n+2) < \dots < \sigma (m)$, and
$(-1)^{\abs{x_\sigma}}$ is the \emph{Koszul sign} of the permutation
of $x_1 \dots x_m$ to $x_{\sigma(1)} \dots x_{\sigma(m)}$ in
$S(\g[1])$.  Here a \emph{codifferential} is a coderivation $D:
S(\g[1]) \to S(\g[1])$ of degree one such that $D^2=0$ and $D(1) =
0$. Given that a coderivation is determined by its projection to the
cogenerators, we can write
\[
D = D_1 + D_2 + D_3 + \dots,
\]
where $D_n$ is the extension as a coderivation of the $n$th symmetric
component $l_n: \nolinebreak[4] S^n(\g[1]) \to \g[1]$ of the
projection $S(\g[1]) \xrightarrow{D} S(\g[1]) \to \g[1]$. An explicit
relation between $D_n$ and $l_n$ will be useful: for $x_1, \dots, x_m
\in \g[1]$
\begin{equation}
\label{coderivation}
D_n (x_1 \dots x_m) = \sum_{\sigma \in \Sh_{n,m-n}} (-1)^{\abs{x_\sigma}}
l_n(x_{\sigma(1)}, \dots, x_{\sigma(n)}) x_{\sigma(n+1)} \dots
x_{\sigma(m)},
\end{equation}
if $m \ge n$, and $D_n (x_1 \dots x_m) = 0$ otherwise.

\begin{thm}[C.~Braun and A.~Lazarev]
  Given an $\Li$-algebra $\g$, the free graded commutative algebra
  $S(\g[-1])$ becomes a $\BVi$-algebra under the $\BVi$ operator
\begin{equation}
\label{BVi-structure}
\Delta := \frac{1}{\hbar} \sum_{n=1}^\infty \hbar^n D_n.
\end{equation}
\end{thm}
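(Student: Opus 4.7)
My plan is to check the four conditions in Definition~\ref{BVi-df} for the operator $\Delta = \hbar^{-1}\sum_{n\ge 1}\hbar^n D_n$ acting on $S(\g[-1])$. First I would make explicit what $D_n$ means on $S(\g[-1])$: via the d\'ecalage isomorphism $S^n(\g[1]) \cong S^n(\g[-1])[2n]$, each Taylor component $l_n \colon S^n(\g[1]) \to \g[1]$ of degree $1$ corresponds to a graded symmetric map $\tilde l_n \colon S^n(\g[-1]) \to \g[-1]$ of degree $3-2n$, and I define $D_n$ on $S(\g[-1])$ by the shuffle formula (\ref{coderivation}) with $\tilde l_n$ in place of $l_n$. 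Because this formula gives zero on $S^{<n}(\g[-1])$, I immediately get $D_n(1) = 0$ and hence $\Delta(1) = 0$, and by construction $\abs{\hbar^{n-1}D_n} = 2(n-1) + (3-2n) = 1$, so $\Delta$ has degree $1$.

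Next I would verify that each $D_n$ is a differential operator of order at most $n$ on the commutative algebra $S(\g[-1])$. A short direct computation from (\ref{coderivation}) shows that for $a \in \g[-1]$ the commutator $[D_n, L_a]$ is again a shuffle-type operator, but with one fewer argument of $\tilde l_n$ left to be filled from the input, because the two contributions $D_n(a \cdot -)$ and $a \cdot D_n(-)$ differ precisely in the shuffles where $a$ is consumed as an argument of $\tilde l_n$. Iterating $n+1$ times substitutes all $n$ arguments of $\tilde l_n$ and then forces the next commutator to vanish, verifying Definition~\ref{do}. In particular $\Delta_1 = D_1$ is a derivation, so $(S(\g[-1]), D_1)$ is a dg commutative algebra.

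The heart of the proof is the identity $\Delta^2 = 0$, which amounts to
\[
\sum_{i+j=N} D_i D_j = 0 \qquad \text{for all } N \ge 2.
\]
Expanding $D_i D_j(x_1 \cdots x_m)$ via (\ref{coderivation}) produces two kinds of contributions. The ``unnested'' terms, in which $D_i$ picks its arguments disjointly from those consumed by $\tilde l_j$, give products $\tilde l_j(S_1) \cdot \tilde l_i(S_2) \cdot (\text{spectators})$; by graded commutativity of $S(\g[-1])$ these pair off between $D_iD_j$ and $D_jD_i$ in the sum over $i + j = N$ and cancel. The ``nested'' terms, in which the output of $\tilde l_j$ is fed into $\tilde l_i$, assemble, for each choice of $N-1$ of the $x_k$'s and after factoring out the spectators, into precisely the generalized Jacobi relation at arity $N-1$ on the shifted brackets $\tilde l_n$, which vanishes because $\g$ is an $\Li$-algebra.

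The only genuinely delicate point is the sign bookkeeping: the $\Li$-relations are originally phrased on $\g[1]$, while $D_iD_j$ sits on the commutative algebra $S(\g[-1])$, and the cancellations just described rely both on the Koszul signs coming from the d\'ecalage $S^n(\g[1]) \cong S^n(\g[-1])[2n]$ and on the signs from graded commutativity of $S(\g[-1])$. Once these are pinned down — most efficiently by carrying the shift through the shuffle formula (\ref{coderivation}) once and for all — the order claim, the vanishing on $1$, the degree count, and $\Delta^2 = 0$ are all immediate combinatorial consequences, completing the verification of Definition~\ref{BVi-df}.
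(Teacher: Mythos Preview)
Your proof is correct, but it takes a more laborious route than the paper's for the key step $\Delta^2 = 0$. You expand $D_iD_j$ via the shuffle formula, split into ``nested'' and ``unnested'' terms, cancel the unnested ones pairwise, and reduce the nested ones to the generalized Jacobi identities for $\g$. This is essentially a direct re-proof, in the commutative-algebra picture, of the standard equivalence between $D^2=0$ and the $\Li$ relations --- which is already part of the hypothesis. The paper instead exploits this hypothesis directly: since each $D_i$ lowers the symmetric power by $i-1$, the operator $\sum_{i+j=N} D_iD_j$ is precisely the component of $D^2$ that lowers symmetric power by $N-2$; as $D^2=0$ is given by the definition of an $\Li$-algebra, each such component vanishes immediately, with no term-by-term combinatorics or sign tracking needed.

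For the differential-operator property, the two proofs are closer: the paper packages your iterated-commutator computation as Lemma~\ref{prop} (a coderivation extending $S^n(\g[1])\to\g[1]$ is an $n$th-order differential operator on $S(\g[-1])$), observed directly from \eqref{coderivation}, which is exactly what your commutator argument establishes. Your approach has the virtue of being fully unpacked and self-contained; the paper's buys brevity by recognizing that both $\Delta^2=0$ and the order condition are really grading/weight decompositions of facts already encoded in the coalgebraic definition of an $\Li$-algebra.
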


\begin{proof}
\begin{sloppypar}
  Since $D_n: S^m(\g[1]) \to S^{m-n+1}(\g[1])$ is a degree one map, it
  turns into a map $D_n: S^m(\g[-1]) \to S^{m-n+1}(\g[-1])$ of degree
  $3-2n$ under the new grading.\footnote{Strictly speaking, the use of
    $D_n$ to denote the two maps is abuse of notation, because they
    differ by powers of the double suspension operator $\g[1] \to
    \g[-1]$, but we prefer to keep it this way, because double
    suspension does not affect signs.}  For each $n$,
\[
\sum_{i+j = n} D_i D_j = 0,
\]
because this sum is exactly the component of $D^2$ which maps
$S^m(\g[1])$ to $S^{m-n+2} (\g[1])$. The map $D_n$ will also be a
differential operator of order $n$, because of the following lemma,
which expresses an important relation between coderivations and
differential operators and may be observed directly from
Equation~\eqref{coderivation}.
\end{sloppypar}

\begin{lm}
\label{prop}
The coderivation of the coalgebra $S(\g[1])$ extending a linear map
$S^n(\g[1]) \to \g[1]$ becomes a differential operator of order $\le
n$ on the algebra $S(\g[-1])$.
\end{lm}
\end{proof}

The construction of this section seems to be math-physics folklore in
the case when $(\g, d, [-,-])$ is a dg Lie algebra: the differential
$\Delta = D_1 + \hbar D_2$ defines a dg BV algebra structure on
$S(\g[-1])$. The operator $\Delta$ is essentially the homological
Chevalley-Eilenberg differential:
\begin{eqnarray*}
  \Delta (x_1 \dots x_m) & =& \sum_{i=1}^m (-1)^{\abs{x_1
      \dots
      x_{i-1}}} x_1 \dots dx_i \dots x_m \\
  &  + & \hbar \sum_{1 \le i<j \le m} (-1)^{\abs{x_{\sigma(i,j)}}+\abs{x_i}}
  [x_i, x_j] x_1 \dots
  \widehat{x_i} \dots \widehat{x_j} \dots x_m,
\end{eqnarray*}
where $\sigma(i,j)$ is the corresponding shuffle, the $x_i$'s in $\g$
are treated as elements of $\g[-1]$, and, following standard
conventions, $d = l_1$ and $[x_i, x_j] = (-1)^{\abs{x_i}} l_2(x_i,
x_j)$.

\begin{rem}
  An $A_\infty$-analogue of the above construction has been proposed
  by J.~Terilla, T.~Tradler, and S.~Wilson in
  \cite{terilla-tradler-wilson}: for an $\Ai$-algebra $V$, the tensor
  algebra $T(V[-1])$ (equipped with the shuffle product) is provided
  with a $\BVi$-structure. There is also an interesting generalization
  to the $\infty$-version of a Lie-Rinehart pair considered by
  L.~Vitagliano \cite{vitagliano}.
\end{rem}

\begin{rem}
  We will also need a certain $\hbar$-enhancement of the construction
  of a $\BVi$-algebra from an $\Li$-algebra. Suppose the graded
  $k[[\hbar]]$-module $\g[[\hbar]]$ for a graded vector space $\g$ is
  provided with the structure of a topological $\Li$-algebra over
  $k[[\hbar]]$ with respect to $\hbar$-adic topology. Then the same
  formula \eqref{BVi-structure} defines a $\BVi$-structure on
  $S(\g[-1])$ over $k$. There is a subtlety, though: each operator
  $D_n$ is a formal power series in $\hbar$ now, and in the
  $\hbar$-expansion
\[
\Delta = \frac{1}{\hbar} \sum_{n=1}^\infty \hbar^n \Delta_n,
\]
there are contributions to $\Delta_n$ from $D_1$, $D_{2}$, \dots, and
$D_n$. This still guarantees that $\Delta_n$ is a differential
operator of order at most $n$ on $S(\g[-1])$ satisfying the conditions
of Definition~\ref{BVi-df}.
\end{rem}

To summarize, given an $\Li$-algebra $\g$, we obtain a canonical
$\BVi$-algebra structure on $S(\g[-1])$. There is also a construction
going in the opposite direction.

\section{From homotopy BV algebras to homotopy Lie algebras}
\label{bv-to-l}

Suppose we have a $\BVi$-algebra $V$. Then for each $n \ge 1$, the
following \emph{higher derived brackets}
\begin{eqnarray}
\label{higher-brackets-h}
l^\hbar_n (a_1, \dots, a_n) & := & [[
\dots [\Delta, L_{a_1}], \dots], L_{a_n} ] 1\\
\nonumber
& = & \sum_{k=n}^\infty \hbar^{k-1}
[[ \dots [\Delta_k, L_{a_1}], \dots], L_{a_n} ] 1
\end{eqnarray}
on $V[[\hbar]]$, their $\hbar$-modification
\begin{equation}
\label{modified}
L_n := \frac{1}{\hbar^{n-1}} l^\hbar_n,
\end{equation}
and their ``semiclassical limit''
\begin{eqnarray}
\label{higher-brackets}
l_n (a_1, \dots, a_n) & := & \lim_{\hbar \to 0} L_n (a_1, \dots, a_n)\\
\nonumber
& = & \lim_{\hbar \to 0} \frac{1}{\hbar^{n-1}} [[
\dots [\Delta, L_{a_1}], \dots], L_{a_n} ] 1\\
\nonumber
& = & [[ \dots [\Delta_n, L_{a_1}], \dots], L_{a_n} ] 1
\end{eqnarray}
on $V$ turn out to be $\Li$ brackets, according to the results in this
section below.  Just for comparison, note that $l^\hbar_1 = L_1 =
\Delta$, whereas $l_1 = \Delta_1$. Observe also that we have a linear
(or strict) $\Li$-morphism
\begin{eqnarray*}
(V[[\hbar]][-1], l^\hbar_n) & \to & (V[[\hbar]][1], L_n),\\
v & \mapsto & \hbar v,
\end{eqnarray*}
which becomes an $\Li$-isomorphism after localization in
$\hbar$. Thus, we can think of the $\Li$ structure given by the
brackets $L_n$ as an $\hbar$-translation of the $\Li$ structure given
by $l^\hbar_n$.

One can express $\Delta$ through $l^\hbar_n$ via the following useful
formula
\begin{equation}
\label{magic}
\Delta(a_1 \dots a_n) = \sum_{j=1}^n \sum_{\sigma \in \Sh_{j,n-j}}
(-1)^{\abs{a_\sigma}} l^\hbar_j (a_{\sigma(1)}, \dots, a_{\sigma(j)})
a_{\sigma(j+1)} \dots a_{\sigma(n)}
\end{equation}
for $a_1, \dots, a_n \in V$, which is easy to prove by induction on
$n$ using Equation~\eqref{deviation} below, starting with $n=1$ for
$l_1^\hbar = \Delta$.

\begin{thm}[Bering-Damgaard-Alfaro]
\label{bda}
For a $\BVi$-algebra $V$, the higher brackets $l^\hbar_n$, $ n \ge 1$,
defined by \eqref{higher-brackets-h} endow the suspension
$V[[\hbar]][-1]$ with the structure of an $\Li$-algebra over
$k[[\hbar]]$. Moreover, the bracket $l^\hbar_{n+1}$ measures the
deviation of $l^\hbar_n$ from being a multiderivation with respect to
multiplication.
\end{thm}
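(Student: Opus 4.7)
My plan is to establish three things in sequence: graded symmetry of each $l^\hbar_n$, a ``deviation formula'' identifying $l^\hbar_{n+1}$ as the failure of $l^\hbar_n$ to be a multiderivation, and the $\Li$-relations themselves. Graded symmetry is immediate from graded commutativity of $V$: since $[L_a, L_b] = 0$, for any operator $D$ one has $[[D, L_a], L_b] = (-1)^{\abs{a}\abs{b}}[[D, L_b], L_a]$, and iterating this with $D = \Delta$ makes each $l^\hbar_n$ graded symmetric.

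Second, for the deviation formula, set $D := [[\ldots[\Delta, L_{a_1}], \ldots], L_{a_{n-1}}]$, so that $l^\hbar_n(a_1, \ldots, a_{n-1}, c) = [D, L_c](1)$. Writing $L_{ab} = L_a L_b$ and applying the graded Leibniz rule for commutators twice gives
\begin{align*}
[D, L_{ab}](1)
&= [D, L_a](b) + (-1)^{\abs{a}\abs{D}} a \cdot [D, L_b](1) \\
&= [[D, L_a], L_b](1) + (-1)^{\abs{b}(\abs{D}+\abs{a})} b \cdot [D, L_a](1) \\
&\quad + (-1)^{\abs{a}\abs{D}} a \cdot [D, L_b](1).
\end{align*}
The first term on the right is exactly $l^\hbar_{n+1}(a_1, \ldots, a_{n-1}, a, b)$, while the remaining two realize the Koszul-signed Leibniz pattern for $l^\hbar_n(a_1, \ldots, a_{n-1}, -)$ on the product $ab$. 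Combined with the graded symmetry of the first step, this is the ``moreover'' clause of the theorem.

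For the $\Li$-relations I would invoke T.~Voronov's higher derived brackets construction \cite{tvoronov:hdb,tvoronov:hdba}. Equip $\operatorname{End}_k(V[[\hbar]])$ with the graded commutator. The subspace $A := \{L_a : a \in V[[\hbar]]\}$ is an abelian Lie subalgebra (again by $[L_a, L_b] = 0$), isomorphic as a graded $k[[\hbar]]$-module to $V[[\hbar]]$, and $P(D) := L_{D(1)}$ is a projector onto $A$ whose kernel $\{D : D(1) = 0\}$ is itself a Lie subalgebra: if $D_1(1) = D_2(1) = 0$ then $[D_1, D_2](1) = 0$. The conditions $\Delta^2 = 0$ and $\Delta(1) = 0$ say exactly that $[\Delta, \Delta] = 0$ and $\Delta \in \ker P$, so Voronov's theorem furnishes an $\Li$ structure on $A[-1] \cong V[[\hbar]][-1]$ over $k[[\hbar]]$ with brackets $P[[\ldots [\Delta, L_{a_1}], \ldots], L_{a_n}]$, which upon evaluating $P$ at $1$ coincide with the $l^\hbar_n$ of \eqref{higher-brackets-h}. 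The main obstacle in a self-contained argument is the sign bookkeeping needed to show directly that the coderivation extending the $l^\hbar_n$ on $S(V[[\hbar]])$ squares to zero, where \eqref{magic} and the deviation formula are the cancellation mechanism; Voronov's theorem isolates this combinatorics once and for all, leaving $\Delta^2 = 0$ and $\Delta(1) = 0$ as the only $\BVi$-specific inputs.
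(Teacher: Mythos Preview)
Your argument is correct and follows essentially the same route as the paper: graded symmetry from $[L_a,L_b]=0$, the deviation formula from the operator identity $[Q,L_{ab}]=[[Q,L_a],L_b]+(-1)^{\abs{Q}\abs{a}}L_a[Q,L_b]+(-1)^{(\abs{Q}+\abs{a})\abs{b}}L_b[Q,L_a]$, and the $\Li$ relations by appeal to T.~Voronov's higher derived bracket machinery. The only cosmetic difference is that the paper records the concrete identity
\[
\sum_{n=1}^m \sum_{\sigma \in \Sh_{n,m-n}} (-1)^{\abs{a_\sigma}}
l^\hbar_{m-n+1}(l^\hbar_n(a_{\sigma(1)}, \dots, a_{\sigma(n)}),
a_{\sigma(n+1)}, \dots, a_{\sigma(m)})
= [[\dots[\Delta^2,L_{a_1}],\dots],L_{a_m}]\,1
\]
and deduces the Jacobi identities from $\Delta^2=0$, whereas you set up the projector $P(D)=L_{D(1)}$ and invoke Voronov's theorem abstractly; these are two packagings of the same mechanism.
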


\begin{rem}
  This result was first observed by the physicists
  \cite{bering-damgaard-alfaro} and proven in a more general context
  by T.~Voronov \cite{tvoronov:hdb,tvoronov:hdba}. The $\Li$ structure
  was also rediscovered by O.~Kravchenko in \cite{kravchenko:bv}.
\end{rem}

\begin{proof}
  Using the Jacobi identity for the commutator of linear operators
  along with the fact that $L_a$ and $L_b$ (graded) commute, it is
  easy to check that the higher brackets $l^\hbar_n$ are symmetric on
  $V[[\hbar]]$:
\[
l^\hbar_n(a_{\pi(1)}, \dots, a_{\pi(n)}) = (-1)^{\abs{a_\pi}}
l^\hbar_n (a_1, \dots, a_n)
\]
for all $a_1, \dots, a_n \in V[[\hbar]]$, where $(-1)^{\abs{a_\pi}}$
is the Koszul sign, see Section~\ref{l-to-bv}. Since $\abs{\Delta} =
1$, the degree of $l^\hbar_n$ as a bracket on $V[[\hbar]]$ will be the
same. We can extend the $k[[\hbar]]$-linear operators $l^\hbar_n:
S^n(V)[[\hbar]] \to V[[\hbar]]$ to coderivations $D_n: S(V)[[\hbar]]
\to S(V)[[\hbar]]$ and consider the total coderivation
\[
D = D_1 + D_2 + \dots.
\]
on $S(V)[[\hbar]]$. The differential property $D^2=0$ for this
coderivation is equivalent to the series of \emph{higher Jacobi
  identities}:
\[
\sum_{n=1}^m \sum_{\sigma \in \Sh_{n,m-n}} (-1)^{\abs{a_\sigma}}
l^\hbar_{m-n+1}(l^\hbar_n(a_{\sigma(1)}, \dots, a_{\sigma(n)}),
a_{\sigma(n+1)}, \dots, a_{\sigma(m)}) = 0
\]
for all $a_1, \dots, a_m \in V[[\hbar]]$, $m \ge 1$. The physicists
\cite{bering-damgaard-alfaro} and T.~Voronov \cite{tvoronov:hdb} in a
more general situation checked these identities using the following
key observation for an arbitrary odd operator $\Delta$ on
$V[[\hbar]]$, not necessarily squaring to zero:
\begin{multline*}
  \sum_{n=1}^m \sum_{\sigma \in \Sh_{n,m-n}} (-1)^{\abs{a_\sigma}}
  l^\hbar_{m-n+1}(l^\hbar_n(a_{\sigma(1)}, \dots, a_{\sigma(n)}),
  a_{\sigma(n+1)},
  \dots, a_{\sigma(m)}) \\
  = [[ \dots [\Delta^2, L_{a_1}], \dots], L_{a_m} ] 1.
\end{multline*}
Given that $\Delta^2=0$, the higher Jacobi identities follow.

The deviated multiderivation property, more precisely,
\begin{multline}
\label{deviation}
l^\hbar_{n+1} (a_1, \dots, a_i, a_{i+1}, \dots, a_{n+1})
= l^\hbar_{n} (a_1, \dots, a_i \cdot a_{i+1}, \dots, a_{n+1})
\\- (-1)^{(1+\abs{a_1} + \dots \abs{a_{i-1}})\abs{a_i}}
a_{i} l^\hbar_{n} (a_1, \dots, a_{i-1}, a_{i+1}, \dots, a_{n+1})
\\- (-1)^{(1+\abs{a_1} + \dots \abs{a_{i}})\abs{a_{i+1}}}
a_{i+1} l^\hbar_{n} (a_1, \dots, a_{i}, a_{i+2}, \dots, a_{n+1})
\end{multline}
of the higher brackets may be derived from the identity
\[
[Q, L_{ab} ] = [[Q,L_a],L_b] + (-1)^{\abs{Q}\cdot \abs{a}} L_a
[Q,L_b] + (-1)^{(\abs{Q}+\abs{a})\abs{b}} L_b [Q,L_a]
\]
for an arbitrary (homogeneous) linear operator $Q$ on
$V[[\hbar]]$. Applying this to $Q = [[ \dots [\Delta, L_{a_1}], \dots
], L_{a_n}]$, we see that $l^\hbar_{n+1}$ measures the deviation of
$l^\hbar_n$ from being a derivation in the last variable. Since the
higher brackets are symmetric, we obtain the same property in each
variable.
\end{proof}

\begin{coro}[T.~Voronov \cite{tvoronov:hdb}]
\label{tv}
  Given a $\BVi$-algebra $V$, the brackets $L_n$, $n \ge 1$, defined
  by \eqref{modified} endow the graded $k[[\hbar]]$-module
  $V[[\hbar]][1]$ with the structure of an $\Li$-algebra over
  $k[[\hbar]]$. Likewise, the brackets $l_n$, $n \ge 1$, defined by
  \eqref{higher-brackets} endow the graded vector space $V[1]$ with
  the structure of an $\Li$-algebra over $k$. Moreover, the brackets
  $l_n$ are multiderivations of the graded commutative algebra
  structure.
\end{coro}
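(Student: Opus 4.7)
The plan is to deduce all three claims from Theorem~\ref{bda} by rescaling by powers of $\hbar$ and then passing to the semiclassical limit $\hbar = 0$; no new coherence identities are required beyond those already established for $l^\hbar_n$.

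For the first claim, I would substitute $l^\hbar_n = \hbar^{n-1} L_n$ and $l^\hbar_{m-n+1} = \hbar^{m-n} L_{m-n+1}$ into the $m$th higher Jacobi identity for $l^\hbar_n$ on $V[[\hbar]][-1]$. Each summand then picks up a common factor $\hbar^{(n-1)+(m-n)} = \hbar^{m-1}$ that is independent of the summation index $n$, so dividing through gives the $m$th higher Jacobi identity for the $L_n$'s. A short degree count confirms that $L_n$ has degree $2-n$ on $V[[\hbar]][1]$. Equivalently, as already noted in the paragraph preceding Theorem~\ref{bda}, the degree-zero $k$-linear map $V[[\hbar]][-1] \to V[[\hbar]][1]$, $v \mapsto \hbar v$, is a strict $\Li$-morphism, since $L_n(\hbar v_1, \dots, \hbar v_n) = \hbar^n L_n(v_1, \dots, v_n) = \hbar\, l^\hbar_n(v_1, \dots, v_n)$, and the structure on $(V[[\hbar]][1], L_n)$ is obtained by transport.

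For the second claim, I would note that by \eqref{higher-brackets-h},
\[
L_n = \hbar^{1-n} l^\hbar_n = \sum_{k \ge n} \hbar^{k-n} [[\dots[\Delta_k, L_{a_1}], \dots], L_{a_n}]\,1
\]
is a formal power series in $\hbar$ whose constant term is $l_n$. Consequently, the higher Jacobi identities for $L_n$, which are identities in $V[[\hbar]]$, remain valid after reducing modulo $\hbar$ and yield the higher Jacobi identities for $l_n$ over $k$. Equivalently, the $k[[\hbar]]$-linear surjection $V[[\hbar]][1] \twoheadrightarrow V[[\hbar]][1]/\hbar V[[\hbar]][1] = V[1]$ is a strict $\Li$-morphism from $(V[[\hbar]][1], L_n)$ to $(V[1], l_n)$.

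For the multiderivation property, I would start from the deviation equation \eqref{deviation} of Theorem~\ref{bda}. Substituting $l^\hbar_n = \hbar^{n-1} L_n$ and $l^\hbar_{n+1} = \hbar^n L_{n+1}$ and cancelling $\hbar^{n-1}$ converts it into
\begin{multline*}
\hbar\, L_{n+1}(a_1, \dots, a_{n+1}) = L_n(a_1, \dots, a_i \cdot a_{i+1}, \dots, a_{n+1})\\
- (-1)^{(1 + \abs{a_1} + \dots + \abs{a_{i-1}})\abs{a_i}}\, a_i\, L_n(a_1, \dots, a_{i-1}, a_{i+1}, \dots, a_{n+1})\\
- (-1)^{(1 + \abs{a_1} + \dots + \abs{a_i})\abs{a_{i+1}}}\, a_{i+1}\, L_n(a_1, \dots, a_i, a_{i+2}, \dots, a_{n+1}).
\end{multline*}
Setting $\hbar = 0$ kills the left-hand side, leaving precisely the graded Leibniz rule for $l_n$ in the $i$th argument; graded symmetry of $l_n$ (inherited from $l^\hbar_n$) then upgrades this to a multiderivation property in every slot. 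The only real work throughout is careful bookkeeping of $\hbar$-powers and Koszul signs propagated from Theorem~\ref{bda}; this is purely mechanical, and there is no genuine obstacle beyond it.
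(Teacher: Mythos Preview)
Your proposal is correct and follows essentially the same approach as the paper: both derive the corollary from Theorem~\ref{bda} by rescaling the higher Jacobi identities and the deviation identity \eqref{deviation} by appropriate powers of $\hbar$, and then taking the semiclassical limit $\hbar \to 0$. Your write-up is somewhat more explicit about the bookkeeping (isolating the common factor $\hbar^{m-1}$, spelling out the reduction modulo $\hbar$, and recording the degree of $L_n$), but the underlying argument is identical to the paper's.
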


\begin{proof}
  The statements of the corollary are obtained as the ``semiclassical
  limit'' of the statements of Theorem~\ref{bda}, and so is the
  proof. Note the change of suspension to desuspension from the
  theorem to the corollary. This corresponds intuitively to the
  statement that the semiclassical limit of the space $V[[\hbar]][-1]$
  is $\hbar V[-1] = V[1]$. Concretely, the desuspension guarantees
  that the degree of the $n$th higher bracket $l_n$ on $V[2]$ is still
  one: indeed $\abs{\Delta_n} = 3 -2n$, when $\Delta_n$ is considered
  as an operator on $V$; therefore, the degree of $l_n$ as a
  multilinear operation on $V[2]$ will be $3-2n + 2(n-1) = 1$.

  The multiderivation property is obtained by dividing
  \eqref{deviation} by $\hbar^{n-1}$ and noticing that the left-hand
  side will not survive the limit as $\hbar \to 0$, because it has
  $\hbar$ as a factor.
\end{proof}

\begin{rem}
  The algebraic structure which combines the graded commutative
  multiplication with the $\Li$ structure given by the brackets $l_n$
  is a particular case of the $G_\infty$-algebra structure, see
  \cite{getzler-jones,tamarkin1998,tamarkin:thesis,voronov:hga}.
\end{rem}

\begin{rem}
  The construction given by the higher brackets $l_n$ obviously
  induces an operad morphism $s\Li \to \BVi$, where $s$ denotes the
  operadic suspension, see, \emph{e.g}.,
  \cite{markl-shnider-stasheff}. Here $\BVi$ stands for the operad
  describing commutative $\BVi$-algebras, as opposed to the full
  $\BVi$ operad of
  \cite{tamarkin-tsygan,galvez-carrillo-tonks-vallette,dc-v}. This
  operad morphism immediately gives a functor from the category of
  $\BVi$-algebras to that of $\Li$-algebras, provided we restrict
  ourselves to morphisms of algebras over operads, \emph{i.e}.,
  consider only linear (strict) morphisms. However, we will focus on
  nonlinear morphisms in the subsequent sections of the paper.
\end{rem}

\begin{ex}
  The $\Li$ structure given by the brackets $l_n$ coming from the
  $\BVi$ structure of Example~\ref{dr-koszul} is known as the de
  Rham-Koszul $\Li$ structure and generalizes the Koszul brackets on
  the de Rham complex of a manifold,
  \cite{khudaverdian-voronov,braun-lazarev,vitagliano}.
\end{ex}

\begin{ex}
  Let $\g$ be an $\Li$-algebra. Then by the construction of
  Section~\ref{l-to-bv}, we get the structure of a $\BVi$-algebra on
  $S(\g[-1])$. If we apply the ``semiclassical'' derived brackets
  $l_n$ of this section to the $\BVi$-algebra $S(\g[-1])$, we will get
  the structure of an $\Li$-algebra on $S(\g[-1])[1]$. Later we show
  in Theorem~\ref{BVi-L}\eqref{3} that the $\Li$-algebra $\g$ becomes
  an $\Li$-subalgebra of $S(\g[-1])[1]$. The higher brackets on
  $S(\g[-1])[1]$ may be viewed as extensions of the higher brackets on
  $\g$ as multiderivations. These higher brackets \emph{generalize the
    Schouten bracket} on the exterior algebra of a Lie algebra to the
  $\Li$ case.
\end{ex}

Our goal is to characterize those $\BVi$-algebras which come from
$\Li$-algebras as in Section~\ref{l-to-bv}. Note that such a
$\BVi$-algebra is free as a graded commutative algebra by
construction: $V = S(U)$, and that for each $n \ge 1$, the $n$th
component $\Delta_n$ of the $\BVi$ operator maps $S^m(U)$ to
$S^{m-n+1}(U)$ for $m \ge n$ and to 0 for $0 \le m < n$, because of
Equation~\eqref{coderivation}. Since an $n$th-order differential
operator on a free algebra $S(U)$ is determined by its restriction to
$S^{\le n}(U)$, this condition on $\Delta_n$ is equivalent to the
condition that $\Delta_n$ maps $S^n(U)$ to $U$ and $S^{<n}(U)$ to
0. Interpreting differential operators on $S(U)$ as linear
combinations of partial derivatives with polynomial coefficients,
differential operators of the above type may also be characterized as
\emph{differential operators of order $n$ with linear coefficients}.

\begin{df}
A \emph{pure} $\BVi$-algebra is the free graded commutative algebra
$S(U)$ on a graded vector space $U$ with a $\BVi$ operator $\Delta:
S(U) \to S(U)[[\hbar]]$ such that, for each $n \ge 1$, $\Delta_n$ maps
$S^n(U)$ to $U$ and $S^{<n}(U)$ to 0.
\end{df}

The following theorem (Parts \eqref{1} and \eqref{2}) shows that
freeness and purity are not only necessary but also sufficient
conditions for a $\BVi$-algebra to arise from an $\Li$-algebra.

\begin{thm}
\label{BVi-L}
\begin{enumerate}
\item
\label{1}
Given a pure $\BVi$ algebra $(V = S(U), \Delta)$, the restriction of
the brackets $l_n$ to $U[1] \subset S(U)[1]$ provides $U[1]$ with the
structure of an $\Li$-subalgebra.
\item
\label{2}
The original pure $\BVi$ structure on $S(U)$ coincides with the $\BVi$
structure \eqref{BVi-structure} of Section~$\ref{l-to-bv}$ coming from
the derived $\Li$ structure on $U[1]$.
\item 
\label{3}
If we start with an $\Li$ structure on a graded vector space $U[1]$
and construct the $\BVi$-algebra $S(U)$ as in Section~$\ref{l-to-bv}$,
then the derived brackets $l_n$ on $U[1] \subset S(U)[1]$ return the
original $\Li$ structure on $U[1]$.
\end{enumerate}
\end{thm}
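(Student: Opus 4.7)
The whole theorem hinges on one collapse: for a pure $\BVi$-operator and $u_1, \dots, u_n \in U$, expanding the iterated commutator
\[
l_n(u_1, \dots, u_n) = [[\dots[\Delta_n, L_{u_1}], \dots], L_{u_n}]\,1
\]
yields a signed sum over subsets $S \subseteq \{1, \dots, n\}$ of terms proportional to $\bigl(\prod_{i \notin S} u_i\bigr)\,\Delta_n\bigl(\prod_{i \in S} u_i\bigr)$. Purity of $\Delta_n$ annihilates every summand with $|S| < n$, so only the top term $\pm\Delta_n(u_1 \cdots u_n)$ survives. Since purity also forces $\Delta_n(u_1 \cdots u_n) \in U$, this immediately proves part \eqref{1}: the $\Li$-brackets from Corollary \ref{tv} close on $U[1] \subset S(U)[1]$, and the restriction recovers the bracket $l_n(u_1, \dots, u_n) = \pm \Delta_n(u_1 \cdots u_n)$.

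For part \eqref{2}, write $\Delta'$ for the $\BVi$-operator associated via \eqref{BVi-structure} to the derived $\Li$-structure on $U[1]$. Equation \eqref{coderivation} evaluated at $k = n$ shows that $\Delta'_n$ vanishes on $S^{<n}(U)$ and equals the multilinear map $l_n$ on $S^n(U)$; hence $\Delta'_n$ is pure and, by the collapse above, agrees with $\Delta_n$ on $S^n(U)$. To propagate this agreement to all of $S(U)$, I plan to establish the following uniqueness lemma: any differential operator of order $\le n$ on $S(U)$ that vanishes on $S^{\le n}(U)$ is identically zero. I would prove it by a double induction on the order and on the symmetric degree $k$, using the identity $E(u \cdot y) = [E, L_u](y) \pm u \cdot E(y)$ together with the fact that $[E, L_u]$ has strictly lower order and inherits vanishing on $S^{\le n-1}(U)$. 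Applying the lemma to $E = \Delta_n - \Delta'_n$ then yields $\Delta_n = \Delta'_n$ globally.

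Part \eqref{3} is immediate from \eqref{coderivation} at $k = m$: the coderivation extension $D_m$ of the input $\Li$-bracket $l_m$ satisfies $D_m(u_1 \cdots u_m) = l_m(u_1, \dots, u_m)$ on $S^m(U)$, and combining this with the collapse of part \eqref{1} shows that the derived brackets of $\Delta = \hbar^{-1}\sum \hbar^m D_m$ recover the original $l_m$ on $U[1]$.

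The main obstacle is the uniqueness lemma in part \eqref{2}; the remainder of the argument reduces to careful bookkeeping around \eqref{coderivation} and the collapse formula, with the usual vigilance about Koszul signs. A minor point to watch is that the collapse formula implicitly identifies each pure component $\Delta_n$ with a symmetric multilinear map $S^n(U) \to U$, and one must check that this identification coincides with the one produced by the coderivation extension of Section \ref{l-to-bv}, so that the two correspondences $\Delta \leftrightarrow \{l_n\}$ in \eqref{2} and \eqref{3} are genuinely mutually inverse.
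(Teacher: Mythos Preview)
Your proposal is correct and follows essentially the same route as the paper: the collapse of the iterated commutator under purity is exactly the paper's first step, and your uniqueness lemma is just a restatement of the paper's claim that an $n$th-order differential operator on $S(U)$ is determined by its restriction to $S^{\le n}(U)$, which the paper invokes without proof. The only difference is that you propose to prove that lemma explicitly by induction, whereas the paper treats it as standard; your cautionary remark about matching the two identifications of $\Delta_n$ with a multilinear map is handled in the paper by the same observation (both restrict to $l_n$ on $S^n(U)$ and both are $n$th-order operators vanishing on $S^{<n}(U)$).
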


\begin{proof}
  The first statement we need to check is that $l_n(a_1, \dots, a_n)$
  is in $U$ whenever $a_1, \dots, a_n \in U$ and $n \ge 1$, as a
  priori all we know is that $l_n(a_1, \dots, a_n) \in S(U)$. The
  condition that $\Delta_n$ maps $S^m(U)$ to 0 for $0 \le m < n$
  implies by \eqref{higher-brackets} that $l_n(a_1, \dots, a_n) =
  \Delta_n(a_1 \dots a_n)$, which must be in $U$, because of the
  condition $\Delta_n: S^n(U) \to S^1(U) = U$.

  For the second statement, we need to check that the $n$th-order
  differential operator $\Delta_n$, the $n$th component of the given
  $\BVi$ structure, is equal to the coderivation $D_n$ defined by
  \eqref{coderivation}. Recall that on the free algebra $S(U)$, an
  $n$th-order differential operator is determined by its restriction
  to $S^{\le n}(U)$. Given the assumption that $\Delta_n$ vanishes on
  $S^{< n}(U)$, it follows that $\Delta_n$ on $S(U)$ is determined by
  its restriction to $S^n(U)$. By the previous paragraph, its
  restriction to $S^n(U)$ is equal to $l_n$. On the other hand, this
  is also the restriction of the coderivation $D_n$ to $S^n(U)$, as
  per formula \eqref{coderivation}.  Lemma~\ref{prop} shows that the
  coderivation $D_n$ is also an $n$th-order differential
  operator. Thus, it is also determined by its restriction to
  $S^n(U)$.

  Finally, let $l_n$ be the $\Li$ brackets on an $\Li$-algebra $U[1]$
  and $\tilde{l}_n$ be the higher derived brackets produced on the
  pure $\BVi$-algebra $S(U)$ by formula \eqref{higher-brackets} for
  $n=1$, $2$, \dots We claim that $\tilde{l}_n (x_1,\dots,x_n) =l_n
  (x_1,\dots,x_n)$ for all $n$ and $x_1,\dots,x_n\in U$. Indeed,
  \[
  \tilde{l}_n(x_1,\dots,x_n)=[[ \dots [D_n, L_{x_1}], \dots], L_{x_n}
  ] 1,
  \]
  where $D_n$ is the extension of $l_n$ to $S(U)$ as a coderivation,
  see Equation~\eqref{coderivation}. The same equation implies that
  $D_n:S(U)\to S(U)$ is zero on $S^{<n}(U)$. Hence all but one term
  $(D_n\circ L_{x_1}\circ\dots\circ L_{x_n})(1)$ of this iterated
  commutator vanish.  It remains to observe that by
  \eqref{coderivation} this is nothing but $l_n(x_1,\dots,x_n)$.
\end{proof}

\begin{rem}
  A general, not necessarily pure $\BVi$-structure on $S(U)$ leads to
  an interesting algebraic structure on $U[1]$, called an
  \emph{involutive $\Li$-bialgebra} or an $\IBLi$-\emph{algebra}, see
  \cite{cieliebak-fukaya-latschev,muenster-sachs}. From the
  properadic, rather than BV prospective, this structure is described
  in \cite{galvez-carrillo-tonks-vallette} and \cite{dc-t-t}. The BV
  formalism for ordinary $\Li$-bialgebras seems to be subtler:
  apparently, one needs to weaken the definition of a $\BVi$ structure
  on $S(U)$ by requiring that for each $n \ge 1$, the coefficient by
  $\hbar^{n-1}$ in the expansion of $\Delta^2 = \frac{1}{2} [\Delta,
  \Delta]$ be of order $\le n-1$, rather than the expected order $\le
  n$, instead of asking for vanishing of each coefficient of
  $\Delta^2$ in its expansion in $\hbar$.
\end{rem}

\section{Functoriality}
\label{functor}

The correspondence between $\BVi$-algebras and $\Li$-algebras
that we studied above has remarkable functorial properties with a
suitable notion of a morphism between $\BVi$-algebras.

First of all, recall the definition of a morphism between
$\Li$-algebras.

\begin{df}
  An $\Li$-\emph{morphism} $\g \to \g'$ between $\Li$-algebras is a
  morphism $S(\g [1]) \to S(\g' [1])$ of codifferential graded
  coalgebras, \emph{i.e.}, a morphism of graded coalgebras commuting
  with the structure codifferentials, such that $1 \in S^0(\g[1])$
  maps to $1 \in S^0(\g'[1])$.
\end{df}

\begin{rem}
  Since we deal with counital coalgebras, we assume that
  $\Li$-morphisms respect the counits. The extra condition $1 \mapsto
  1$ means that we are talking about ``pointed'' morphisms, if we
  invoke the interpretation of $\Li$-morphisms as morphisms between
  formal pointed dg manifolds, see \cite{kontsevich-soibelman}.
\end{rem}

Now we will consider the corresponding notion of a morphism between
$\BVi$-algebras. We will only need this notion for $\BVi$-algebras of
Theorem~\ref{BVi-L}, that is to say, $\BVi$-algebras which are
pure. Somewhat more generally, we will give a definition in the case
when the source $\BVi$-algebra is just free. A more general notion of
a $\BVi$-morphism for more general $\BVi$-algebras can be found in
\cite{tamarkin-tsygan}. We use the definition of a $\BVi$-morphism by
Cieliebak-Latchev \cite{cieliebak-latschev}.

Before giving the definition, we need to recall a few more
notions. Fix a morphism $f: A \to A'$ between graded commutative algebras. We
say that a $k$-linear map $D: A \to A'$ is a \emph{differential
  operator of order $\le n$ over $f: A \to A'$} or simply a
\emph{relative differential operator of order $\le n$} if for any
$n+1$ elements $a_0, \dots, a_n \in A$, we have
\[
[[ \dots [D, L_{a_0}], \dots], L_{a_n} ] = 0,
\]
where $[D,L_a]$ is understood as the map $A \to A'$ defined by
\[
[D,L_a] (b) := D(ab) - (-1)^{\abs{a} \cdot \abs{D}} f(a) D(b).
\]
For $f = \id$ we recover the standard definition Def.~\ref{do} of a
differential operator on a graded commutative algebra.

Let $V=S(U)$ be a free graded commutative algebra and $V'$ an
arbitrary graded commutative algebra. Given a $k$-linear map $\varphi:
S(U) \to V'[[\hbar]]$ of degree zero such that $\varphi(1) = 0$,
define a degree-zero, continuous $k[[\hbar]]$-linear map
$\exp(\varphi): S(U)[[\hbar]] \to V'[[\hbar]]$, called the
\emph{exponential}, by
\begin{multline*}
\exp(\varphi) (x_1 \dots x_m) := \\ \sum_{k=1}^m \frac{1}{k!}  \! \!
\! \! \! \sum_{\substack{i_1,\dots,i_k = 1\\i_1 + \dots + i_k = m}}^{m} \!
\! \! \!  \frac{1}{i_1! \dots i_k!}  \! \! \! \sum_{\sigma \in S_m}
(-1)^{\abs{x_\sigma}} \varphi(x_{\sigma(1)} \dots x_{\sigma(i_1)})
\dots \varphi(x_{\sigma(m-i_k+1)} \dots x_{\sigma(m)}),
\end{multline*}
where $S_m$ denotes the symmetric group, $x_1, \dots, x_m$ are in $U$,
and $(-1)^{\abs{x_\sigma}}$ is the Koszul sign of the permutation of
$x_1 \dots x_m$ to $x_{\sigma(1)} \dots x_{\sigma(m)}$ in $S(U)$. By
convention, we set $\exp(\varphi) (1) := 1$. The reason for the
exponential notation, introduced by Cieliebak and Latschev
\cite{cieliebak-latschev}, is, perhaps, the following statement, which
they might have been aware of,
cf.\ \cite{cieliebak-fukaya-latschev}. The proof is a straightforward
computation.

\begin{lm}
\label{exp}
If $S \in \lambda U[[\hbar]]^0[[\lambda]]$ or $\lambda
U((\hbar))^0[[\lambda]]$, where $\lambda$ is another, degree-zero
formal variable, then
\[
\exp(\varphi) (e^S) = e^{\varphi(e^S)}.
\]
Here we have extended $\varphi$ and $\exp(\varphi)$ to $\lambda
S(U)((\hbar))[[\lambda]]$ by $\hbar^{-1}$- and $\lambda$-linearity and
continuity.
\end{lm}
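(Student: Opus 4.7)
The plan is to expand both sides as formal power series in $\lambda$ and compare them coefficient-wise. Because $S$ is divisible by $\lambda$, the series $e^{S} = \sum_{m \geq 0} S^{m}/m!$ converges in the $\lambda$-adic topology, so by the continuous $\lambda$-linearity of $\exp(\varphi)$ the left-hand side equals $\sum_{m \geq 0} \exp(\varphi)(S^{m})/m!$. Everything thus reduces to computing $\exp(\varphi)(S^{m})$ when a single even element $S$ (with scalars in $\lambda k[[\hbar]][[\lambda]]$, or $\lambda k((\hbar))[[\lambda]]$ in the second case) is fed into each of the $m$ slots.

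Plugging $x_{1} = \cdots = x_{m} = S$ into the defining formula for $\exp(\varphi)$, every Koszul sign $(-1)^{\abs{x_{\sigma}}}$ equals $+1$ because $S$ is even, and each summand is independent of $\sigma \in S_{m}$, so the $\sigma$-sum collapses to a factor of $m!$. What remains is a sum over compositions $(i_{1},\dots,i_{k})$ of $m$ into positive parts:
\[
\frac{\exp(\varphi)(S^{m})}{m!} = \sum_{k=1}^{m} \frac{1}{k!} \sum_{\substack{i_{1},\dots,i_{k}\geq 1\\ i_{1}+\cdots+i_{k}=m}} \prod_{j=1}^{k} \frac{\varphi(S^{i_{j}})}{i_{j}!}.
\]

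Next I sum over $m \geq 0$, using the convention $\exp(\varphi)(1) := 1$ for the $m=0$ term, and interchange the order of summation so that $k$ is fixed first and the compositions of all $m \geq k$ range freely. The inner $k$-fold sum then factors as a $k$th power, yielding
\[
\exp(\varphi)(e^{S}) = \sum_{k \geq 0} \frac{1}{k!}\left(\sum_{i \geq 1} \frac{\varphi(S^{i})}{i!}\right)^{k}.
\]
Since $\varphi(1) = 0$, the inner sum is precisely $\varphi(e^{S})$, and the outer sum is $e^{\varphi(e^{S})}$. The $\lambda U((\hbar))^{0}[[\lambda]]$ case proceeds identically, because the continuous $\hbar^{-1}$-linear extension of $\exp(\varphi)$ is still $\lambda$-adically continuous and all formal manipulations above carry over unchanged.

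The only real subtlety is the rearrangement of infinite sums, but this is automatic in the $\lambda$-adic topology: each coefficient of $\lambda^{n}$ in any of the expressions above receives contributions from only finitely many $m$ (at most $m = n$, since $S$ is divisible by $\lambda$), so every step is a manipulation of finite sums coefficient-wise in $\lambda$. No analytic obstacle arises, and the lemma is essentially a formal-power-series incarnation of the exponential formula, with the Bell-polynomial combinatorics already built into the definition of $\exp(\varphi)$.
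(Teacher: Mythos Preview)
Your proof is correct and is precisely the ``straightforward computation'' the paper alludes to without spelling out: the paper gives no detailed argument for this lemma, so there is nothing to compare against beyond confirming that your expansion-and-regrouping argument is the natural one. Your treatment of the Koszul signs, the collapse of the $S_m$-sum, the interchange of summations justified $\lambda$-adically, and the use of $\varphi(1)=0$ are all in order.
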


\begin{rem}
  The extra formal variable $\lambda$ in the lemma guarantees
  ``convergence'' of the exponential $e^S$. We could have achieved the
  same goal, if we considered completions of our algebras or assumed
  that $\lambda$ was a nilpotent variable, varying over the maximal
  ideals of finite-dimensional local Artin algebras. Informally
  speaking, given the way the space $S(U)[\lambda, \hbar, \hbar^{-1}]$
  of $S(U)$-valued polynomials in $\lambda$ and Laurent polynomials in
  $\hbar$ is completed: $S(U)((\hbar))[[\lambda]]$, we could think of
  $\lambda$ as being ``much smaller'' than $\hbar$.
\end{rem}

The exponential, not surprisingly, has an inverse, called the
\emph{logarithm}, which we will use a little later. Given a $k$-linear
map $\Phi: S(U) \to V'[[\hbar]]$ of degree zero such that $\Phi(1) =
1$, define a degree-zero, continuous $k[[\hbar]]$-linear map
$\log(\Phi): S(U)[[\hbar]] \to V'[[\hbar]]$ by
\begin{multline*}
\log(\Phi) (x_1 \dots x_m) \\ := \sum_{k=1}^m \frac{(-1)^{k-1}}{k} \!
\!  \! \sum_{\substack{i_1,\dots,i_k = 1\\i_1 + \dots + i_k = m}}^{m}
\!  \! \! \!  \frac{1}{i_1! \dots i_k!}  \! \sum_{\sigma \in S_m}
(-1)^{\abs{x_\sigma}} \Phi(x_{\sigma(1)} \dots x_{\sigma(i_1)})
\dots\\ \Phi(x_{\sigma(m-i_k+1)} \dots x_{\sigma(m)})
\end{multline*}
under the same notation as for the exponential. By convention, we set
$\log(\Phi) (1) := 0$.

\begin{df}[Cieliebak-Latchev \cite{cieliebak-latschev}]
\label{CL}
  A $\BVi$-\emph{morphism} from a $\BVi$-al\-ge\-bra $(V=S(U), \Delta)$
  to a $\BVi$-algebra $(V', \Delta')$ is a $k$-linear map $\varphi: V
  \to V'[[\hbar]]$ of degree zero satisfying the following properties:
\begin{enumerate}
\item $\varphi(1) = 0$,
\item $\exp(\varphi) \Delta = \Delta' \exp(\varphi)$, and
\item $\varphi$ admits an expansion
\[
\varphi = \frac{1}{\hbar} \sum_{n=1}^\infty \hbar^n \varphi_n,
\]
where $\varphi_n: V \to V'$ is a differential operator of order $\le
n$ over the morphism $S(U) \to V'$ induced by the zero linear map $U
\xrightarrow{0} V'$, \emph{i.e}., $\varphi_n$ maps $S^{>n}(U)$ to $0$.
\end{enumerate}
\end{df}
We will use the same notation for the continuous, $k[[\hbar]]$-linear
extension $\varphi: V[[\hbar]] \to V'[[\hbar]]$ of the $k$-linear map
$\varphi: V \to V'[[\hbar]]$, as well as for the corresponding
$\BVi$-morphism $\varphi: (V, \Delta) \to (V', \Delta')$.

\begin{rem}
  A $\BVi$-morphism can be regarded as a quantization of a morphism of
  dg commutative algebras. Indeed, $\varphi_1$ can be nonzero only on
  $U = S^1(U) \subset S(U)$ and by construction $\exp(\varphi_1)$ will
  be a graded algebra morphism. The equation $\exp(\varphi) \Delta =
  \Delta' \exp(\varphi)$ at $\hbar=0$ reduces to $\exp (\varphi_1)
  \Delta_1 = \Delta'_1 \exp(\varphi_1)$, which implies that
  $\exp(\varphi_1)$ is a morphism of dg algebras with respect to the
  ``classical limits'' $\Delta_1$ and $\Delta'_1$ of the $\BVi$
  operators.
\end{rem}

\begin{ex}
\label{multiplication}
A nice example of a $\BVi$-morphism $S(V) \to V$ may be obtained from
the projection $p_1: S(V) \to V$ of the symmetric algebra $S(V)$ to
its linear component $V = S^1(V)$ for any $\BVi$-algebra $V$. Before
talking about morphisms, we need to provide $S(V)$ with the structure
of a $\BVi$-algebra. To do that, we take the $\Li$ structure on
$V[[\hbar]][1]$ over $k[[\hbar]]$ given by the brackets $L_n$, see
\eqref{modified}, and then the $\BVi$ structure on $S(V)$ from the
remark at the end of Section~\ref{l-to-bv}. To regard $p_1$ as a
$\BVi$-morphism, we compose it with the inclusion $V \subset
V[[\hbar]]$ and get a $k$-linear map $\varphi = \varphi_1: S(V) \to
V[[\hbar]]$. By construction, $\varphi(1) = 0$. One can easily check
that $\exp(\varphi) = m$, the multiplication operator $S(V) \to V$. To
see that $\exp(\varphi)$ commutes with the $\BVi$ operators, we
observe that, for $a_1, \dots, a_n \in V$, the value of the $\BVi$
operator coming from the brackets $L_j$ on the product $a_1 \otimes
\dots \otimes a_n \in S(V)$ is equal to
\[
\sum_{j=1}^n \hbar^{j-1} \sum_{\sigma \in \Sh_{j,n-j}}
(-1)^{\abs{a_\sigma}} L_j(a_{\sigma(1)}, \dots, a_{\sigma(j)}) \otimes
a_{\sigma(j+1)} \otimes \dots \otimes a_{\sigma(n)},
\]
because of Equations \eqref{coderivation} and
\eqref{BVi-structure}. When we apply $m$ to that, the tensor product
(multipliciation in $S(V)$) will change to multiplication in $V$. The
result will just be equal to $(\Delta m) (a_1 \otimes \dots \otimes
a_n)$ in view of Equation~\eqref{magic}.
\end{ex}

Another feature of a $\BVi$-morphism $\varphi: S(U)[[\hbar]] \to
V'[[\hbar]]$ is that it propagates solutions $S \in \lambda
U((\hbar))^2[[\lambda]]$ of the \emph{Quantum Master Equation
  $($QME$\, )$}
\begin{equation}
\label{QME}
\Delta e^{S/\hbar} = 0
\end{equation}
to solutions of the QME in $\lambda V'((\hbar))^2 [[\lambda]]$.

\begin{prop}
If $\varphi: S(U) \to V'$ is a $\BVi$-morphism and $S \in \lambda
U((\hbar))^2[[\lambda]]$ is a solution of the QME $\eqref{QME}$, then
\[
S' := \hbar \varphi(e^{S/\hbar}) \in \lambda V'((\hbar))^2 [[\lambda]]
\]
is a solution of the QME
\[
\Delta' e^{S'/\hbar} = 0.
\]
\end{prop}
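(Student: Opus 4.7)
The plan is to combine the intertwining identity $\exp(\varphi)\Delta = \Delta'\exp(\varphi)$ from Definition~\ref{CL} with the exponential-of-exponential identity of Lemma~\ref{exp}, so that the Quantum Master Equation gets transported from the source to the target by a single application of $\exp(\varphi)$.

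First I would check that the degrees and formal completions line up so that Lemma~\ref{exp} applies. Since $\abs{\hbar}=2$ and $\abs{S}=2$, the rescaled element $S/\hbar$ lies in $\lambda U((\hbar))^0[[\lambda]]$, which is exactly one of the two admissible inputs in the hypothesis of Lemma~\ref{exp}. Applying that lemma then yields
\[
\exp(\varphi)\bigl(e^{S/\hbar}\bigr) \;=\; e^{\varphi(e^{S/\hbar})} \;=\; e^{S'/\hbar},
\]
by the very definition $S' := \hbar\varphi(e^{S/\hbar})$. Because $\varphi$ has degree zero and $\hbar$ has degree $2$, this also shows $S'\in \lambda V'((\hbar))^2[[\lambda]]$; the $\lambda$-valuation is preserved because $\varphi(1)=0$ forces each $\varphi(e^{S/\hbar})$ to belong to $\lambda V'((\hbar))[[\lambda]]$.

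With that identification in place, the intertwining property gives
\[
\Delta' e^{S'/\hbar} \;=\; \Delta'\exp(\varphi)\bigl(e^{S/\hbar}\bigr) \;=\; \exp(\varphi)\,\Delta\bigl(e^{S/\hbar}\bigr),
\]
and the hypothesis $\Delta e^{S/\hbar}=0$ makes the right-hand side vanish, which is the desired conclusion.

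The only genuine subtlety — and the step that I expect to require the most careful bookkeeping — is that Definition~\ref{CL} literally states the intertwining property for $\exp(\varphi)$ viewed as a continuous $k[[\hbar]]$-linear map on $S(U)[[\hbar]]$, whereas $e^{S/\hbar}$ lives in the larger space $\lambda S(U)((\hbar))[[\lambda]]$ obtained by inverting $\hbar$ and adjoining $\lambda$. This is handled exactly as in Lemma~\ref{exp}: one extends $\exp(\varphi)$, $\Delta$, and $\Delta'$ by $\hbar^{-1}$- and $\lambda$-linearity and by $\lambda$-adic continuity, and the relation $\exp(\varphi)\Delta = \Delta'\exp(\varphi)$ persists coefficient-by-coefficient in $\lambda$, since each such coefficient is a finite $\hbar^{-1}$-linear combination of elements on which the original intertwining relation already holds. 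Once this extension is granted, the three displayed lines above constitute the entire proof.
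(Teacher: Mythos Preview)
Your proof is correct and follows essentially the same route as the paper: apply Lemma~\ref{exp} to identify $\exp(\varphi)(e^{S/\hbar})$ with $e^{S'/\hbar}$, then use the intertwining relation $\exp(\varphi)\Delta = \Delta'\exp(\varphi)$ together with the QME for $S$. The paper's version is terser, omitting the degree bookkeeping and the remark on extending to $\lambda S(U)((\hbar))[[\lambda]]$, but the argument is the same.
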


\begin{proof}
  By Lemma~\ref{exp} we have $e^{\varphi(e^{S/\hbar})} =
  \exp(\varphi)(e^{S/\hbar})$. Since $\exp(\varphi)$ must respect the
  $\BVi$ operators, we get
\[
\Delta' e^{\varphi(e^{S/\hbar})} = \Delta' \exp(\varphi)(e^{S/\hbar})
= \exp(\varphi) \Delta (e^{S/\hbar}) = 0.
\]
\end{proof}

Now we are ready to study functorial properties of the correspondence
between $\Li$-algebras and $\BVi$-algebras from
Theorem~\ref{BVi-L}. Since the $\BVi$-algebra corresponding to an
$L_\infty$-algebra is pure, we would like to concentrate on
$\BVi$-morphisms between such $\BVi$-algebras. Among these
$\BVi$-morphisms, those of the following type turn out to form a
category.

\begin{df}
  We will call a $\BVi$-morphism $\varphi: S(U) \to S(U')$ between
  $\BVi$-algebras which are free as graded commutative algebras
  \emph{pure}, if $\varphi_n$ maps $S^n (U)$ to $U' \subset
  S(U')[[\hbar]]$ and all other symmetric powers $S^k(U)$ to 0. In
  other words, one can say that $\varphi_n$ is a \emph{differential
    operator of order $n$ with linear coefficients over the morphism}
  $S(U) \to S(U')$ induced by the zero map $U \xrightarrow{0} S(U')$.
\end{df}

\emph{$\BVi$-algebras which are free as graded commutative algebras}
form a \emph{category} under pure $\BVi$-morphisms in the following
way. Given pure $\BVi$-morphisms $V \xrightarrow{\varphi} V'
\xrightarrow{\psi} V''$, their composition $\psi \diamond \varphi: V
\to V''$ is defined by composing their exponentials:
\[
\psi \diamond \varphi := \log (\exp (\psi) \circ \exp (\varphi)).
\]
Under this composition, the role of identity morphism on $S(U)$ is
played by $\varphi = \varphi_1 = \id_U$: in this case, $\exp (\varphi)
= \id_{S(U)}$.

\begin{prop}
The composition $\psi \diamond \varphi$ of any pure $\BVi$-morphisms
is a pure $\BV_\infty$-morphism.
\end{prop}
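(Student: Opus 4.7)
The plan is to identify the exponential formula with the cofree coalgebra extension of a cogenerating map, so that $\log$ and $\exp$ become mutually inverse on the relevant classes of maps, and then verify the three clauses of Def.~\ref{CL} for $\chi := \psi \diamond \varphi$ directly. The intertwining condition will be essentially free; the real content is tracking $\hbar$-exponents to confirm purity.

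First I would establish, by a direct expansion analogous to the $m = 1, 2$ checks one does by hand, that $\exp$ sets up a bijection between (i) continuous $k[[\hbar]]$-linear degree-zero maps $\theta\colon S(U)[[\hbar]] \to V'[[\hbar]]$ with $\theta(1) = 0$, and (ii) cocommutative coalgebra morphisms $S(U)[[\hbar]] \to S(U')[[\hbar]]$ sending $1$ to $1$, with inverse given by $\log$ (which recovers the projection to cogenerators). With this in hand, $\exp(\chi) = \exp(\psi) \circ \exp(\varphi)$ by the definition of $\diamond$, so clause (2) follows by stacking the intertwining properties of $\varphi$ and $\psi$:
\[
\exp(\chi)\Delta = \exp(\psi)\exp(\varphi)\Delta = \exp(\psi)\Delta'\exp(\varphi) = \Delta''\exp(\psi)\exp(\varphi) = \Delta''\exp(\chi),
\]
while clause (1), $\chi(1) = 0$, is immediate from the $\log$-formula, which sums only over $k \ge 1$ and $m \ge 1$.

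For clause (3) --- purity --- I would expand $\chi_n(x_1,\dots,x_n) = \pi_{U''}\exp(\psi)\exp(\varphi)(x_1 \dots x_n)$ using both exponential formulas together with the purity of $\varphi$ and $\psi$ (so only $\varphi_{i}|_{S^i(U)}$ and $\psi_k|_{S^k(U')}$ contribute, each carrying a prescribed $\hbar$-weight and landing in $U'$ or $U''$, respectively). This yields
\[
\chi_n(x_1,\dots,x_n) = \sum_{k=1}^{n} \frac{1}{k!} \sum_{\substack{i_1+\dots+i_k=n\\ i_j \ge 1}} \frac{1}{i_1! \cdots i_k!} \sum_{\sigma \in S_n} (-1)^{\abs{x_\sigma}}\, \psi_k\bigl(\varphi_{i_1}(x_{\sigma(1)},\dots,x_{\sigma(i_1)}), \dots, \varphi_{i_k}(x_{\sigma(n-i_k+1)},\dots,x_{\sigma(n)})\bigr),
\]
a multilinear map $S^n(U) \to U''$ of total $\hbar$-weight $\hbar^{(k-1)+\sum_j(i_j-1)} = \hbar^{n-1}$, with $\chi_n$ vanishing on $S^{\ne n}(U)$ since any factor $\varphi_{i_j}$ applied to an input of wrong symmetric degree contributes zero. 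This is exactly the $n$th component of a pure $\BVi$-morphism.

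The main obstacle is not any single calculation but the clean identification of $\exp$ with the cofree-coalgebra extension, and with it the verification that $\log$ is a two-sided inverse; once that is in place, purity of $\chi$ collapses to a telescoping $\hbar$-exponent count across a partition of $n$, and the intertwining property collapses to a three-term chain.
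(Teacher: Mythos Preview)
Your proposal is correct and follows essentially the same route as the paper: both arguments reduce to the explicit formula
\[
(\psi \diamond \varphi)(x_1\cdots x_m)=\sum_{k=1}^m \frac{1}{k!}\sum_{\substack{i_1+\cdots+i_k=m\\ i_j\ge 1}}\frac{1}{i_1!\cdots i_k!}\sum_{\sigma\in S_m}(-1)^{\abs{x_\sigma}}\psi\bigl(\varphi(x_{\sigma(1)}\cdots x_{\sigma(i_1)})\cdots\varphi(x_{\sigma(m-i_k+1)}\cdots x_{\sigma(m)})\bigr)
\]
followed by the telescoping $\hbar$-count $(k-1)+\sum_j(i_j-1)=m-1$, with Properties (1) and (2) handled exactly as you describe. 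Your explicit identification of $\exp$/$\log$ with the cofree-coalgebra extension and its projection to cogenerators is a clean way to obtain this formula (the paper just says it is ``easily verified by exponentiating''), and the paper's only organizational difference is that it first establishes Property~(3) using the weaker $\BVi$-morphism condition before invoking purity of $\psi$ at the very end, whereas you use purity of both $\varphi$ and $\psi$ from the outset.
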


\begin{proof}
  First of all, we need to see that the properties (1)-(3) of a
  $\BVi$-morphism are satisfied. Property (1) is satisfied because of
  our conventions on the values of exponentials and logarithms of maps
  at 1. Property (2) is obvious by construction. Property (3) may be
  established from the following formula for pure morphisms:
\begin{multline}
\label{comp}
(\psi \diamond \varphi) (x_1 \dots x_m) \\ = \sum_{k=1}^m \frac{1}{k!}
\!  \!  \! \sum_{\substack{i_1,\dots,i_k = 1\\i_1 + \dots + i_k =
    m}}^{m} \!  \! \! \!  \frac{1}{i_1! \dots i_k!}  \! \sum_{\sigma
  \in S_m} (-1)^{\abs{x_\sigma}} \psi(\varphi(x_{\sigma(1)} \dots
x_{\sigma(i_1)}) \dots\\ \varphi(x_{\sigma(m-i_k+1)} \dots
x_{\sigma(m)})),
\end{multline}
which is easily verified by exponentiating it and comparing it to
$\exp (\psi) \circ \exp (\varphi)$. Indeed, the coefficient $(\psi
\diamond \varphi)_n (x_1 \dots x_m)$ by $\hbar^{n-1}$ on the
right-hand side will be coming from terms
\[
\psi_j(\varphi_{j_1} (x_{\sigma(1)} \dots x_{\sigma(i_1)}) \dots
\varphi_{j_k} (x_{\sigma(m-i_k+1)} \dots x_{\sigma(m)}))
\]
with $j-1+ \sum_{p=1}^k (j_p -1) = j-1+ \sum_{p=1}^k j_p - k =
n-1$. Observe that because of Property (3) for $\psi$ and $\varphi$,
for such a term not to vanish, it is necessary that $j \ge k$ and $j_p
\ge i_p$ for each $p$. Thus, we will have $n = j+ \sum_{p=1}^k j_p - k
\ge k + \sum_{p=1}^k i_p - k = m$, which is Property (3) for $\psi
\diamond \varphi$.  The fact that the composite $\BVi$-morphism is
pure is obvious from Eq.~\eqref{comp} and purity of $\psi$.
\end{proof}

\begin{thm}
\label{equiv}
The correspondence $\g \mapsto S(\g[-1])$ of Section~$\ref{l-to-bv}$
from $\Li$-algebras to $\BVi$-algebras is functorial. This functor
establishes an equivalence between the category of $\Li$-algebras and
the full subcategory of pure $\BVi$-algebras of the category of
$\BVi$-algebras free as graded commutative algebras with pure
morphisms. The functor $V = S(U) \mapsto U[1]$ of
Theorem~$\ref{BVi-L}$\eqref{1} provides a weak inverse to this
equivalence.
\end{thm}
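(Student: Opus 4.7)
The plan is to prove the theorem in three steps: define the functor on morphisms and check functoriality, verify it is fully faithful, and check essential surjectivity and that the proposed weak inverse actually gives a weak inverse. Essential surjectivity is already guaranteed by Theorem~\ref{BVi-L}\eqref{1}-\eqref{2}: every pure $\BVi$-algebra is, by construction, of the form $S(\g[-1])$ with $\BVi$ operator \eqref{BVi-structure} coming from its derived $\Li$-structure on $U[1] = \g$. Likewise, Theorem~\ref{BVi-L}\eqref{3} shows that the putative weak inverse $(V = S(U), \Delta) \mapsto U[1]$ composes with our functor to the identity on $\Li$-algebras (up to natural isomorphism). So the real work lies in constructing the map on morphisms and checking it is bijective on hom-sets.

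To define the functor on morphisms, start with an $\Li$-morphism $F\colon \g \to \g'$, viewed as a counital coalgebra morphism $F\colon S(\g[1]) \to S(\g'[1])$ with $F(1) = 1$ satisfying $F D = D' F$. Extract its Taylor coefficients $F_n\colon S^n(\g[1]) \to \g'[1]$. Under the double desuspension $\g[1] \to \g[-1]$ (which preserves signs by the footnote in Section~\ref{l-to-bv}) these become degree-$(2-2n)$ maps $\varphi_n\colon S^n(\g[-1]) \to \g'[-1]$; extend each by zero to $S^{\ne n}(\g[-1])$ and form $\varphi := \frac{1}{\hbar}\sum_{n \ge 1} \hbar^n \varphi_n$. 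By construction $\varphi(1) = 0$ and $\varphi$ is pure, so properties (1) and (3) of Definition~\ref{CL} hold automatically.

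The heart of the argument is property (2), $\exp(\varphi) \Delta = \Delta' \exp(\varphi)$. The key observation is that the combinatorial formula defining $\exp(\varphi)$ coincides with the classical formula recovering a morphism of cofree cocommutative coalgebras from its corestriction to cogenerators; hence under the vector-space identification $S(\g[-1]) \cong S(\g[1])$, the map $\exp(\varphi)$ \emph{is} the coalgebra morphism $F$. Dually, formula \eqref{magic} expressing $\Delta$ in terms of $l^\hbar_n$, combined with the fact that the $\Li$ operations on $U[1]$ recover the original $\Li$-structure (Theorem~\ref{BVi-L}\eqref{3}), shows that $\Delta$ under the same identification is exactly the codifferential $D$, with the powers of $\hbar$ serving as bookkeeping for arities. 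Thus the equation $\exp(\varphi)\Delta = \Delta'\exp(\varphi)$ is, after this identification, literally the equation $FD = D'F$. Functoriality with respect to composition then follows from comparing formula \eqref{comp} for $\psi \diamond \varphi$ with the standard Taylor formula for the composite coalgebra morphism $G \circ F$ in terms of the Taylor coefficients $G_k$ and $F_{i_p}$.

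Fullness and faithfulness follow from the same dictionary run in reverse: given a pure $\BVi$-morphism $\varphi\colon S(U) \to S(U')$ between pure $\BVi$-algebras, its components $\varphi_n \colon S^n(U) \to U'$ are precisely the Taylor coefficients of a unique degree-zero coalgebra morphism $F\colon S((U[1])[1]) \to S((U'[1])[1])$, and the equation $\exp(\varphi)\Delta = \Delta'\exp(\varphi)$ unwinds to $FD = D'F$, so $F$ is an $\Li$-morphism $U[1] \to U'[1]$ mapping under our functor to $\varphi$. The main obstacle is the careful matching of combinatorial coefficients and signs between $\exp(\varphi)$ and the cofree-coalgebra reconstruction formula, and between \eqref{magic} and the coderivation formula \eqref{coderivation}; both pairs of formulas are structurally identical once the sign-preserving double desuspension is fixed, so the verification is mechanical but requires care in bookkeeping $\hbar$-weights and Koszul signs.
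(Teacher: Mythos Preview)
Your approach is essentially the same as the paper's: extract the Taylor coefficients of the $\Li$-morphism, assemble them into $\varphi_\hbar$, and identify $\exp(\varphi_\hbar)$ with the coalgebra morphism $\Phi$ so that the intertwining relation with $\Delta$ becomes $\Phi D = D'\Phi$. The one place where the paper is more precise than your sketch is that $\exp(\varphi_\hbar)$ is not \emph{literally} $\Phi$ but rather the $\hbar$-weighted sum $\sum_{m\ge 0}\hbar^m\Phi_m$, where $\Phi_m$ is the component of $\Phi$ lowering symmetric degree by $m$; since $\Delta = \sum_m \hbar^{m-1} D_m$ carries the same weighting, the equation $\exp(\varphi_\hbar)\Delta = \Delta'\exp(\varphi_\hbar)$ matches $\Phi D = D'\Phi$ component-by-component in $\hbar$, which is what your phrase ``$\hbar$ serving as bookkeeping'' is gesturing at.
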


Restricting this theorem to the world of dg Lie algebras and dg BV
algebras, we obtain the following corrolaries, which, surprisingly,
seem to be new.

\begin{coro}
  The functor $\g \mapsto S(\g[-1])$ from dg Lie algebras to dg BV
  algebras establishes an equivalence between the category of dg Lie
  algebras with $\Li$-morphisms and the category of dg BV algebras
  $(V, \Delta_1, \Delta_2)$, free as graded commutative algebras
  $V=S(U)$ and whose BV structure is pure: $\Delta_2$ maps $U$ to 0
  and $S^2(U)$ to $U$, with $\BVi$-morphisms $S(U) \to S(U')$
  satisfying the purity condition.
\end{coro}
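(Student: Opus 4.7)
The plan is to reduce the corollary to Theorem~\ref{equiv} by checking that passing from $\Li$-algebras to dg Lie algebras, and from pure $\BVi$-algebras to the dg BV algebras with pure $\Delta_2$ described in the statement, cuts out matching full subcategories on the two sides of the equivalence.

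First I would unwind the $\BVi$ side. By construction \eqref{BVi-structure} the functor $\g \mapsto S(\g[-1])$ sends an $\Li$-algebra to the pure $\BVi$-algebra with $\Delta_n = D_n$, where $D_n$ is the coderivation extension of $l_n$. Formula \eqref{coderivation} makes the assignment $l_n \mapsto D_n$ injective; in particular $D_n$ vanishes iff $l_n$ does. Hence $\g$ is concentrated in brackets $l_1,l_2$ (i.e.\ is a dg Lie algebra) if and only if $\Delta_n = 0$ for $n \geq 3$, which is precisely the dg BV condition. Conversely, starting from a pure $\BVi$-algebra $(S(U),\Delta)$ with $\Delta_n=0$ for $n \geq 3$, Theorem~\ref{BVi-L}\eqref{1} produces brackets $l_n$ on $U[1]$ given by $l_n(x_1,\dots,x_n) = \Delta_n(x_1\cdots x_n)$; these vanish for $n \geq 3$, so the recovered $\Li$-algebra is a dg Lie algebra. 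The purity conditions in the corollary ($\Delta_2$ takes $U$ to $0$ and $S^2(U)$ to $U$) are exactly the specialization of the pure $\BVi$-algebra condition to $n=1,2$. Thus the functor restricts to a bijection on objects, up to isomorphism, between the two described classes.

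Next I would handle the morphisms. The categories in the corollary are full subcategories of those in Theorem~\ref{equiv}: an $\Li$-morphism between dg Lie algebras is by definition an $\Li$-morphism (with possibly nontrivial higher Taylor components), and a pure $\BVi$-morphism $\varphi = \tfrac{1}{\hbar}\sum \hbar^n \varphi_n$ between two dg BV algebras of the described kind is just a pure $\BVi$-morphism of the underlying $\BVi$-algebras. Because the restriction to full subcategories preserves the bijection on Hom-sets, and because Theorem~\ref{equiv} provides this bijection together with a weak inverse $V \mapsto U[1]$, the restricted functor is automatically fully faithful and essentially surjective onto the described subcategory of dg BV algebras.

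The main potential obstacle is a sanity check that the weak inverse $V \mapsto U[1]$ actually lands in dg Lie algebras when applied to a dg BV algebra of the stated form; this is settled by the computation above showing $l_n = \Delta_n|_{S^n(U)} = 0$ for $n \geq 3$. Apart from this observation, the argument is essentially one of transporting the equivalence along matched full subcategories, so no further combinatorics is required.
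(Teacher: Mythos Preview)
Your proposal is correct and follows the paper's approach: the paper presents this corollary as an immediate restriction of Theorem~\ref{equiv} to dg Lie algebras and dg BV algebras, with no further argument, and you have simply spelled out why that restriction cuts out matching full subcategories on both sides. The key observation you make explicit---that $\Delta_n = D_n$ vanishes iff $l_n$ does, so the functor and its weak inverse carry the dg Lie condition to the dg BV condition and back---is exactly what the paper leaves to the reader.
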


\begin{coro}
  The functor $\g \mapsto S(\g[-1])$ from dg Lie algebras to dg BV
  algebras establishes an equivalence between the category of dg Lie
  algebras (with dg Lie morphisms) and the category of dg BV algebras
  $(V, \Delta_1, \Delta_2)$, free as graded commutative algebras $V =
  S(U)$ and whose BV structure is pure: $\Delta_2$ maps $U$ to 0 and
  $S^2(U)$ to $U$, with morphisms defined as morphisms $\Phi: S(U) \to
  S(U')$ of graded algebras respecting the differentials $\Delta_1$
  and $\Delta_2$ and satisfying the purity condition: $\Phi$ maps $U$
  to $U'$.
\end{coro}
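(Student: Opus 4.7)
The plan is to obtain this strict version by specializing the equivalence of Theorem~\ref{equiv} and the preceding corollary to linear morphisms on each side. First I would observe that an ordinary dg Lie morphism $f\colon\g\to\g'$ is exactly an $\Li$-morphism whose only nonzero Taylor component is $f_1=f$; equivalently, the multiplicative coalgebra map $S(f[1])\colon S(\g[1])\to S(\g'[1])$ is automatically a morphism of dg coalgebras. This identifies dg Lie algebras with dg Lie morphisms as a (non-full) subcategory of the category of dg Lie algebras with $\Li$-morphisms.

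Correspondingly, on the BV side I would isolate those pure $\BVi$-morphisms $\varphi\colon S(U)\to S(U')$ whose only nonzero $\hbar$-component is $\varphi_1$, regarded as a linear map $U\to U'$ extended by zero on $S^{>1}(U)$. For such a $\varphi$ the exponential formula collapses, since every admissible partition of $x_1\cdots x_m$ must have each $i_p=1$, leaving
\[
\exp(\varphi)(x_1\cdots x_m)=\varphi_1(x_1)\cdots\varphi_1(x_m).
\]
Thus $\Phi:=\exp(\varphi)$ is precisely a graded commutative algebra homomorphism $S(U)\to S(U')$ mapping $U$ into $U'$, and every such $\Phi$ arises uniquely from $\varphi_1=\Phi|_U$.

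Next I would match the intertwining conditions. Because the BV structures in question have only $\Delta_1$ and $\Delta_2$, we have $\Delta=\Delta_1+\hbar\Delta_2$, so the $\BVi$-morphism equation $\exp(\varphi)\Delta=\Delta'\exp(\varphi)$ becomes $\Phi\Delta=\Delta'\Phi$; extracting the $\hbar^0$ and $\hbar^1$ components yields exactly $\Phi\Delta_1=\Delta_1'\Phi$ and $\Phi\Delta_2=\Delta_2'\Phi$. I would also verify that the diamond composition $\psi\diamond\varphi$ of two strict pure $\BVi$-morphisms is again strict and reduces to the linear composition $\psi_1\circ\varphi_1$, matching ordinary composition of dg Lie morphisms on the other side; this is immediate from the composition formula~\eqref{comp}. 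Combined with Theorem~\ref{equiv}, this yields the claimed equivalence.

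The main (mild) obstacle is the $\hbar$-bookkeeping in the intertwining equation: one must confirm that because $\Delta_n$ and $\Delta'_n$ vanish for $n\ge3$, the identity $\Phi\Delta=\Delta'\Phi$ carries no information beyond the two identities $\Phi\Delta_i=\Delta_i'\Phi$ ($i=1,2$), and in particular imposes no hidden higher-order constraints on $\Phi$. Once this is verified, the remainder of the argument is routine.
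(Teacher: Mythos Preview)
Your proposal is correct and follows exactly the approach the paper intends: the paper offers no separate proof of this corollary, presenting it simply as the restriction of Theorem~\ref{equiv} (and its first corollary) to strict morphisms on both sides. Your argument just makes explicit what that restriction entails---identifying dg Lie morphisms with $\Li$-morphisms having a single Taylor component, matching these under the equivalence with pure $\BVi$-morphisms having only $\varphi_1$ nonzero, and observing that $\exp(\varphi_1)$ is then the graded algebra map $\Phi$ satisfying the purity condition, with the $\hbar$-expansion of $\exp(\varphi)\Delta=\Delta'\exp(\varphi)$ reducing to $\Phi\Delta_i=\Delta_i'\Phi$ for $i=1,2$.
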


Now let us prove the theorem.

\begin{proof}
  We need to see that an $\Li$-morphism $\g \to \g'$ induces a
  $\BVi$-morphism $S(\g[-1]) \to S(\g'[-1])$. By definition an
  $\Li$-morphism is a graded coalgebra morphism $\Phi: S(\g[1]) \to
  S(\g'[1])$ respecting the codifferentials and such that $\Phi(1) =
  1$. As a coalgebra morphism, $\Phi$ is determined by its projection
  $\varphi: S(\g[1]) \to \g'[1]$ to the cogenerators $\g'[1]$ via the
  following formula:
\begin{multline}
\label{Phi}
\Phi (x_1 \dots x_m) = \\
\sum_{k=1}^m \frac{1}{k!}  \! \!
  \! \sum_{\substack{i_1,\dots,i_k = 1\\i_1 + \dots + i_k = m}}^{m}
\sum_{\sigma \in \Sh_{i_1, \dots, i_k}} (-1)^{\abs{x_\sigma}}
  \varphi(x_{\sigma(1)} \dots x_{\sigma(i_1)}) \dots
  \varphi(x_{\sigma(m-i_k+1)} \dots x_{\sigma(m)}),
\end{multline}
where $\Sh_{i_1, \dots, i_k}$ denotes the set of $(i_1, \dots, i_k)$
shuffles, $x_1, \dots, x_m$ are in $\g[1]$, and
$(-1)^{\abs{x_\sigma}}$ is the Koszul sign of the permutation of $x_1
\dots x_m$ to $x_{\sigma(1)} \dots x_{\sigma(m)}$ in $S(\g[1])$. (For
$m=0$, we just have $\Phi(1) = 1$ and $\varphi(1) = 0$.)  The above
formula follows from iteration of the coalgebra morphism property:
\[
\delta^{k-1} \Phi = \Phi^{\otimes k} \delta^{k-1}
\]
along with its projection to $(\g'[1])^{\otimes k}$ for each $k = 1,
\dots, m$. To turn $\varphi$ into a $\BVi$-morphism, we need to
rewrite it as a power series in $\hbar$:
\begin{equation}
\label{phi-h}
\varphi_\hbar := \frac{1}{\hbar} \sum_{n=1}^\infty \hbar^n \varphi_n,
\end{equation}
where $\varphi_n: S(\g[-1]) \to S(\g'[-1])$ maps all symmetric powers
to $0$, except for $S^n(\g[-1])$, on which $\varphi_n$ is the
restriction of $\varphi: S(\g[1]) \to \g'[1]$ to $S^n(\g[1])$ along
with an appropriate shift in degree to make it into a linear map
$S^n(\g[-1]) \to \g'[-1]$. Note that the degree of $\varphi$ was
supposed to be zero, as it was a projection of the morphism $\Phi$ of
graded coalgebras. In terms of grading on $S^n(\g[-1])$ and $\g'[-1]$,
the degree of shifted $\varphi_n$ is $2-2n$. Multiplication by
$\hbar^{n-1}$ shifts that degree back to $0$, thus we see that the
degree of $\varphi_\hbar$ is zero as well.

Note that by construction, the purity condition on $\varphi_\hbar$ is
satisfied, and thereby we have
\begin{multline*}
\exp(\varphi_\hbar) (x_1 \dots x_m) \\ = \sum_{k=1}^m
\frac{\hbar^{m-k}}{k!}  \! \! \! \! \! \!
\sum_{\substack{i_1,\dots,i_k = 1\\i_1 + \dots + i_k = m}}^{m} \!  \!
\! \!  \frac{1}{i_1! \dots i_k!}  \! \sum_{\sigma \in S_m}
(-1)^{\abs{x_\sigma}} \varphi (x_{\sigma(1)} \dots x_{\sigma(i_1)})
\dots \\ \varphi (x_{\sigma(m-i_k+1)} \dots x_{\sigma(m)}),
\end{multline*}
whence, comparing this to the right-hand side of \eqref{Phi}, we get
\[
\exp(\varphi_\hbar) = \sum_{m=0}^\infty \hbar^{m} \Phi_m,
\]
where $\Phi_{m}$ is the component of $\Phi$ of degree $-m$ in the
grading given by the symmetric power, so as
\[
\Phi = \sum_{m=0}^\infty \Phi_{m}.
\]

\begin{sloppypar}
We know that $\Phi$ is compatible with the structure codifferentials
$D$ and $D'$ of $\g$ and $\g'$: $\Phi D = D' \Phi$. The $\BVi$
operator on $S(\g[-1])$ was defined as $\Delta = \sum_{m =1}^\infty
\hbar^{m-1} D_m$, where $D_m$ maps each $S^n(\g[1])$ to
$S^{n-m+1}(\g[1])$; likewise for $S(\g'[-1])$, see
\eqref{BVi-structure}. Thus, the equation $\exp(\varphi_\hbar) \Delta
= \Delta' \exp(\varphi_\hbar)$ is satisfied, being just a weighted sum
of the components of the equation $\Phi D = D' \Phi$, where the
component shifting the symmetric power down by $n \ge 0$ is being
multiplied by $\hbar^n$. This completes verification of the fact that
$\varphi_\hbar$ is a pure $\BVi$-morphism.
\end{sloppypar}

Conversely, we need to see that every pure $\BVi$-morphism comes from
an $\Li$-morphism. By Theorem~\ref{BVi-L} we can assume that the
source and the target of this $\BVi$-morphism are the $\BVi$-algebras
$S(\g[-1])$ and $S(\g'[-1])$ coming from some $\Li$-algebras $\g$ and
$\g'$. Every $\BVi$-morphism is given by a formal $\hbar$-series like
\eqref{phi-h} satisfying the three conditions of
Definition~\ref{CL}. Since the morphism is pure, we can ``drop'' the
$\hbar$ from $\varphi_\hbar$ and note that the formal series
\[
\varphi := \sum_{n=1}^\infty \varphi_n
\]
will produce a well-defined linear map $S(\g[1]) \to \g'[1]$. Dropping
the $\hbar$ results in this map also having degree zero. Now we can
generate a unique morphism $\Phi: S(\g[1]) \to S(\g'[1])$ of
coalgebras by the linear map $\varphi$. This morphism $\Phi$ will be
given by formula \eqref{Phi}. Since $\varphi_h (1) = 0$, we get
$\varphi(1) = 0$ and $\Phi(1) = 1$ by the same formula. We just need
to check that this morphism $\Phi$ respects the codifferentials $D$
and $D'$ on these two coalgebras, respectively. As in the first part
of the proof, we see that the equation $\exp(\varphi_\hbar) \Delta =
\Delta' \exp(\varphi_\hbar)$ implies $\Phi D = D' \Phi$. Thus, $\Phi$
is an $L_\infty$-morphism.

We also need to check the functoriality properties of the
correspondence $\g \mapsto S(\g[-1])$. The fact that $\id_{\g}$ maps
to the identity morphism is obvious. Now, if we have two
$\Li$-morphisms $\g \to \g' \to \g''$ given by dg coalgebra morphisms
$S(\g[1]) \xrightarrow{\Phi} S(\g'[1]) \xrightarrow{\Psi} S(\g''[1])$
with $\Phi = \sum_{m=0}^\infty \Phi_m$ and $\Psi = \sum_{m=0}^\infty
\Psi_m$, we note that the exponentials $\exp(\varphi_\hbar) =
\sum_{m=0}^\infty \hbar^m \Phi_m$ and $\exp(\psi_\hbar) =
\sum_{m=0}^\infty \hbar^m \Psi_m$ of the respective $\BVi$-morphisms
will compose in the same way as $\Phi$ and $\Psi$, the only difference
being that the component decreasing the symmetric power by $m$ gets
multiplied by $\hbar^m$.
\end{proof}

\section{Adjunction}

In this section, we prove a certain ``adjunction'' property. The
quotation marks are due to the fact that in our setting arbitrary
$\BVi$-algebras do not even make up a category. However, in the
theorem below, only $\BVi$-morphisms from a domain of the form
$S(\g[-1])$, with $\g$ being an $\Li$-algebra, play a role.

Recall that given an $\Li$-algebra $\g$, we have constructed a
$\BVi$-algebra $S(\g[-1])$ in Section~\ref{l-to-bv}. Conversely, given
a $\BVi$-algebra $V$, we have used the higher derived brackets $L_n$
to induce an $\Li$-structure on $V[[\hbar]][1]$ over $k[[\hbar]]$ as
in Corollary~\ref{tv}.

Note that both constructions are functorial. The fact that $\g \mapsto
S(\g[-1])$ defines a functor is the first statement of
Theorem~\ref{equiv}. We need to see that the construction assigning to
a $\BVi$-algebra $V$ the $\Li$-algebra $(V[[\hbar]][1], L_n)$ is also
functorial. Given a $\BVi$-morphism $\varphi: V = S(U) \to V'$, we
need to construct an $\Li$-morphism $V[[\hbar]][1]\to
V'[[\hbar]][1]$. This construction will be accomplished in two steps.
\smallskip

\noindent
\emph{Step} 1. Compose the $\BVi$-morphism $\varphi: V \to V'$ with
the $\BVi$-morphism $p_1: S(V) \to V$ of Example~\ref{multiplication}
to get a $\BVi$-morphism $\varphi \diamond p_1: S(V) \to V'$.
\smallskip

\noindent
\emph{Step} 2. Given an $\Li$-algebra $\g[[\hbar]]$ over $k[[\hbar]]$
and a $\BVi$-morphism $\psi: S(\g[-1]) \to V'$, where $S(\g[-1])$ is
provided with the $\BVi$ structure of the remark at the end of
Section~\ref{l-to-bv}, we will construct a canonical $\Li$-morphism
$\g[[\hbar]] \to V'[[\hbar]][1]$. Then we will just apply this
construction to the $\BVi$-morphism $S(V) \to V'$ of Step 1.

In order to construct an $\Li$-morphism $\g[[\hbar]] \to
V'[[\hbar]][1]$, take the graded $k[[\hbar]]$-coalgebra morphism,
continuous in the $\hbar$-adic topology,
\[
F: S(\g[1])[[\hbar]] \to S(V'[2])[[\hbar]]
\]
induced by the $k[[\hbar]]$-linear map
\[
f: S(\g[1])[[\hbar]] \to V'[[\hbar]][2]
\]
whose restriction $f|_{S^k(\g[1])[[\hbar]]}: S^k(\g[1])[[\hbar]] \to
V'[[\hbar]][2]$ is the restriction of $\hbar^{1-k} \psi$ to
$S^k(\g[1])[[\hbar]$ for $k \ge 0$:
\[
f|_{S^k(\g[1])[[\hbar]]} = \hbar^{1-k} \psi|_{S^k(\g[1])[[\hbar]]}.
\]
This map takes values in $V'[[\hbar]][2]$, despite the division by a
power of $\hbar$, because the restriction of $\psi$ to $S^k (\g[-1])$
is in fact equal to $\sum_{n=k}^\infty \hbar^{n-1} \psi_n =
\hbar^{k-1} \sum_{n=0}^\infty \hbar^{n} \psi_{n+k}$. Note that since
$\psi$ is of degree zero, $f$ will also have degree zero.

We need to check that $F$ defines an $\Li$-morphism. It is easy to see
that $F(1) = 1$, because $\psi (1) = 0$. What is far less trivial is
the fact that $F$ respects the codifferentials, the structure
codifferential $D$ on $S(\g[1])[[\hbar]]$ and the codifferential $D'$
on $S(V'[2])[[\hbar]]$ induced as a continuous coderivation, see
\eqref{coderivation}, by the sum of the brackets \eqref{modified}:
\[
L_n: S^n(V'[2])[[\hbar]] \to V'[[\hbar]][2].
\]
What we know is $\Delta' \exp (\psi) = \exp (\psi) \Delta$, where
$\Delta'$ is the $\BVi$ operator on $V'$ and $\Delta$ is the structure
codifferential $D$ on $S(\g[1])[[\hbar]]$ enhanced by $\hbar$, as in
the remark at the end of Section~\ref{l-to-bv}. To see that this
implies the equation $D' F = F D$, we need to develop some BV calculus
and compare it to colagebra calculus.

Let us start with coalgebra calculus. Each side of the equation is a
continuous coderivation over the coalgebra morphism $F$ and as such
determined by the projection $p_1: S(V'[2])[[\hbar]] \to
V'[[\hbar]][2]$ to the cogenerators $V'[[\hbar]][2]$ of the
range. Thus, all we need to show is that $p_1 D' F = p_1 F D$, after
projecting to the cogenerators. Now, for a monomial $x_1 \dots x_m \in
S^m(\g[1])$, we have
\begin{multline}
\label{D-F}
  p_1 D' F (x_1 \dots x_m) \\
  = \sum_{k=1}^m \frac{1}{k!}  \! \!  \! \sum_{\substack{i_1,\dots,i_k
      = 1\\i_1 + \dots + i_k = m}}^{m} \sum_{\sigma \in \Sh_{i_1,
      \dots, i_k}} (-1)^{\abs{x_\sigma}} L_k(f(x_{\sigma(1)} \dots
  x_{\sigma(i_1)}), \dots,\\
  f(x_{\sigma(m-i_k+1)} \dots x_{\sigma(m)})),
\end{multline}
using the shuffle notation, see Equation~\eqref{Phi}, as well as
\begin{equation}
\label{F-D}
p_1 F D (x_1 \dots x_m) = f (D(x_1 \dots x_m)).
\end{equation}
We need to show that the right-hand sides of these equations are
equal, based on the equation $\Delta' \exp (\psi) = \exp (\psi)
\Delta$. We will do that after we develop some BV calculus.

Turning to BV calculus, we have
\begin{multline}
\label{coderivation-delta}
\Delta (x_1 \dots x_m) \\
= \sum_{k=1}^m \hbar^{k-1} \sum_{\tau \in \Sh_{k,m-k}}
(-1)^{\abs{x_\tau}} l_k(x_{\tau(1)}, \dots, x_{\tau(k)}) x_{\tau(k+1)}
\dots x_{\tau(m)},
\end{multline}
where $l_k$'s are the $\Li$ brackets on $\g$, because of
Equation~\eqref{coderivation}. Now apply $\exp(\psi)$ to both sides,
reassemble products of $\psi$'s not containing $l_k$'s into $\exp
(\psi)$, and use \eqref{coderivation-delta} again to pass from $l_k$'s
back to $\Delta$ and get
\begin{multline}
\label{exp-delta}
\exp (\psi) \Delta (x_1 \dots x_m) \\
= \sum_{n=1}^m \sum_{\sigma \in \Sh_{n,m-n}} (-1)^{\abs{x_\sigma}}
\psi(\Delta(x_{\sigma(1)} \dots x_{\sigma(n)})) \exp (\psi)
(x_{\sigma(n+1)} \dots x_{\sigma(m)}).
\end{multline}

Move on to computation of $\Delta' \exp(\psi)$:
\begin{multline}
\label{delta-exp}
  \Delta' \exp(\psi) (x_1 \dots x_m) \\
  = \sum_{n=1}^m \sum_{\sigma \in \Sh_{n,m-n}} (-1)^{\abs{x_\sigma}}\\
  \sum_{k=1}^n \frac{1}{k!}  \! \!  \! \sum_{\substack{i_1,\dots,i_k =
      1\\i_1 + \dots + i_k = n}}^{n} \sum_{\tau \in \Sh_{i_1, \dots,
      i_k}} (-1)^{\abs{x_{\tau \sigma}}} l^\hbar_k(\psi(x_{\tau
    \sigma(1)} \dots
  x_{\tau \sigma(i_1)}), \dots,\\
  \psi(x_{\tau \sigma(n-i_k+1)} \dots x_{\tau \sigma(n)})) \exp
  (\psi) (x_{\sigma(n+1)} \dots x_{\sigma(m)}),
\end{multline}
which follows from the definition of $\exp(\psi)$ and the identity
\eqref{magic}.

Now let us compare \eqref{exp-delta} with \eqref{delta-exp}, which are
equal by assumption. One can show by induction on $m$ that the top,
$n=m$ terms of the two formulas must also be equal:
\begin{multline*}
  \psi(\Delta(x_{1} \dots x_{m})) \\ =
\sum_{k=1}^m \frac{1}{k!}  \! \!  \! \sum_{\substack{i_1,\dots,i_k =
      1\\i_1 + \dots + i_k = m}}^{m} \sum_{\tau \in \Sh_{i_1, \dots,
      i_k}} (-1)^{\abs{x_{\tau}}} l^\hbar_k(\psi(x_{\tau(1)} \dots
  x_{\tau(i_1)}), \dots,\\
  \psi(x_{\tau(m-i_k+1)} \dots x_{\tau(m)})).
\end{multline*}
It remains to pass from $\psi$, $\Delta$, and $l^\hbar_k$ to $f$, $D$,
and $L_k$, respectively, in this equation, with appropriate powers of
$\hbar$, resulting in the equation
\begin{multline*}
  \hbar^{m-1} f(D(x_{1} \dots x_{m})) \\= \hbar^{m-1} \sum_{k=1}^m
  \frac{1}{k!}  \! \!  \! \sum_{\substack{i_1,\dots,i_k = 1\\i_1 +
      \dots + i_k = m}}^{m} \sum_{\tau \in \Sh_{i_1, \dots, i_k}}
  (-1)^{\abs{x_{\tau}}} L_k(f(x_{\tau(1)} \dots
  x_{\tau(i_1)}), \dots,\\
  f(x_{\tau(m-i_k+1)} \dots x_{\tau(m)})).
\end{multline*}
In view of \eqref{D-F} and \eqref{F-D}, we see that $D' F = F D$.
This completes Step 2.

\begin{thm}
  Suppose $\g$ is an $\Li$-algebra and $V$ is a $\BVi$-algebra. There
  exists a canonical bijection
\[
\Hom_{\BVi} (S(\g[-1]), V) \cong \Hom_{\Li} (\g, V[[\hbar]][1]),
\]
where the $\Li$-structure on $V[[\hbar]][1]$ is given by the modified
brackets $L_n$. This bijection is natural in the $\Li$-algebra $\g$
and in the $\BVi$-algebra $V$.
\end{thm}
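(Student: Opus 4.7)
My plan is to use the construction of Step 2 as the forward map of the bijection, and to write down its inverse explicitly by solving the formula $f_k = \hbar^{1-k}\psi|_{S^k(\g[1])[[\hbar]]}$ for $\psi$ in terms of the Taylor components $f_k$ of a given $\Li$-morphism; mutual invertibility will then be tautological from the construction, and the two naturality statements can be checked separately.

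For the forward direction, a $\BVi$-morphism $\psi: S(\g[-1]) \to V$ is fed into Step 2 to produce a continuous $k[[\hbar]]$-linear dg coalgebra morphism $F: S(\g[1])[[\hbar]] \to S(V[2])[[\hbar]]$, equivalently an $\Li$-morphism $\g[[\hbar]] \to V[[\hbar]][1]$ over $k[[\hbar]]$; its restriction along the inclusion $\g \hookrightarrow \g[[\hbar]]$ lies in $\Hom_{\Li}(\g, V[[\hbar]][1])$. For the inverse, given an $\Li$-morphism with Taylor components $f_k: S^k(\g[1]) \to V[[\hbar]][2]$ for $k \ge 1$, define a $k$-linear map $\psi: S(\g[-1]) \to V[[\hbar]]$ by $\psi|_{S^k(\g[-1])} := \hbar^{k-1} f_k$ (the double-suspension identifications $\g[1] \leftrightarrow \g[-1]$ and $V[2] \leftrightarrow V$ being absorbed into the shift of grading). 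Writing $f_k = \sum_{n \ge 0} \hbar^n f_{k,n}$, the $\hbar$-expansion $\psi = \hbar^{-1}\sum_{m \ge 1} \hbar^m \psi_m$ has $\psi_m|_{S^k} = f_{k,m-k}$ for $k \le m$ and zero otherwise, so each $\psi_m$ vanishes on $S^{>m}(\g[-1])$ and is thereby a differential operator of order $\le m$ over the zero morphism $U \to V$. Setting $\psi(1) := 0$ takes care of condition~(1) of Definition~\ref{CL}, and the intertwining relation $\exp(\psi)\Delta = \Delta'\exp(\psi)$ follows by running the BV/coalgebra calculus comparison of Step 2 backwards, since the induction on $m$ that extracted the top-degree equation at each stage is an equivalence of equations.

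The two assignments are mutually inverse by the identical rule $\psi|_{S^k} = \hbar^{k-1} f_k$ in both directions. Naturality in $\g$ is immediate from Theorem~\ref{equiv}: an $\Li$-morphism $\g' \to \g$ translates to a pure $\BVi$-morphism $S(\g'[-1]) \to S(\g[-1])$, and pre-composition matches on the two sides. Naturality in $V$ is the main obstacle, because the two-step functorial recipe $V \mapsto V[[\hbar]][1]$ on $\BVi$-morphisms $\eta: V \to V'$ (compose with $p_1: S(V) \to V$ of Example~\ref{multiplication}, then apply Step 2) must be shown to be compatible with post-composition inside the adjunction. Concretely, this reduces to verifying that applying Step 2 to the $\diamond$-composite of $\eta$ with $\psi$ yields the same $\Li$-morphism as first applying Step 2 to $\psi$ and then post-composing with the functorial image of $\eta$; I expect this to follow from the defining identity $\exp(\eta \diamond \psi) = \exp(\eta) \circ \exp(\psi)$ for the composition of pure $\BVi$-morphisms, combined with Lemma~\ref{exp} and the coalgebra-morphism nature of $F$, but carrying out this bookkeeping with all of the $\hbar$-weights in place is the principal technical cost of the proof.
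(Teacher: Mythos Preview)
Your proposal is correct and follows essentially the same approach as the paper: both directions of the bijection are built from the single conversion rule $\psi|_{S^k} = \hbar^{k-1} f_k$, with the verification that the inverse map lands in $\BVi$-morphisms obtained by running the Step~2 calculus backward. The only notable difference is that you flag naturality in $V$ as ``the main obstacle'' and ``the principal technical cost,'' whereas the paper dispatches both naturality statements in a single sentence by observing that, under the conversion formula, $F$ and $\exp(\varphi)$ are given by the same combinatorial formula except for the insertion of powers of $\hbar$ tracking the drop in symmetric degree; since composition on both sides (coalgebra-morphism composition for $\Li$, $\diamond$-composition via $\exp$ for $\BVi$) is governed by that same formula, the $\hbar$-weights automatically match. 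So the bookkeeping you anticipate is real but lighter than you suggest: once you see that the $\hbar$-grading is exactly the symmetric-power grading shift, the compatibility of the conversion with composition is immediate and no separate induction is needed.
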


\begin{proof}
  A correspondence from the $\BVi$-morphisms on the left-hand side to
  the $\Li$-morphisms on the right-hand side was constructed in Step 2
  before the theorem in a more general case of an $\Li$-algebra over
  $k[[\hbar]]$.

  Conversely, given an $\Li$-morphism $F: S(\g[1]) \to
  S(V[2])[[\hbar]]$, we use the same conversion formula
\begin{equation}
  \label{conversion}
\varphi|_{S^k(\g[1])[[\hbar]]} = \hbar^{k-1} f|_{S^k(\g[1])[[\hbar]]},
\end{equation}
$f$ being the projection of $F$ to the cogenerators $V[2][[\hbar]]$,
for $k \ge 0$, as in Step~2 before the theorem, to get a
$\BVi$-morphism $\varphi: S(\g[-1]) \to V$. Tracing the argument there
backward, we see that $\varphi$ is indeed a $\BVi$-morphism. This
establishes a bijection in the adjunction formula.

The naturality of the construction follows from the fact that, in view
of \eqref{conversion}, $F$ and $\exp(\varphi)$ are given by almost
identical formulas, with the only difference coming from insertion of
powers of $\hbar$, which plays the role of grading shift.
\end{proof}

\begin{coro}
  The functor $\g \mapsto S(\g[-1])$ of Sections $\ref{l-to-bv}$ and
  $\ref{functor}$ from the category of $\Li$-algebras to the category
  of $\BVi$-algebras free as graded commutative algebras with pure
  morphisms has a right adjoint, which is given by the functor of
  modified higher derived brackets $L_n$.
\end{coro}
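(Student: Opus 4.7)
The plan is to read off the adjunction directly from the hom-set bijection already established in the preceding theorem; what remains is to assemble the two sides into functors and to confirm that the bijection is natural in both variables.

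First I would verify functoriality of the two candidate adjoints. Functoriality of the left adjoint $\g \mapsto S(\g[-1])$, together with the fact that it lands in the subcategory of pure $\BVi$-algebras with pure $\BVi$-morphisms, is precisely the content of Theorem~\ref{equiv}. For the right adjoint $V \mapsto (V[[\hbar]][1], L_n)$, I would use the two-step construction carried out before the theorem: given a $\BVi$-morphism $\varphi \colon V \to V'$, first pre-compose with the canonical $\BVi$-morphism $p_1 \colon S(V) \to V$ of Example~\ref{multiplication} to obtain $\varphi \diamond p_1 \colon S(V) \to V'$, and then apply the $\hbar$-rescaled conversion of Step~2 to produce an $\Li$-morphism $V[[\hbar]][1] \to V'[[\hbar]][1]$. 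Preservation of identities and composition then follows because both the diamond composition of $\BVi$-morphisms and the coalgebra-morphism composition of $\Li$-morphisms are controlled by the exponential of Lemma~\ref{exp}, and the conversion formula $\varphi|_{S^k} = \hbar^{k-1} f|_{S^k}$ intertwines these two pictures.

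Next I would unwind the naturality asserted in the theorem. For morphisms $\g' \to \g$ of $\Li$-algebras and $V \to V'$ in the $\BVi$ setting, naturality of
\[
\Hom_{\BVi}(S(\g[-1]), V) \cong \Hom_{\Li}(\g, V[[\hbar]][1])
\]
amounts to checking that pre- and post-composition with the induced maps commute with the conversion $\varphi \leftrightarrow F$ of formula~\eqref{conversion}. Because that conversion only inserts powers of $\hbar$ on homogeneous symmetric-power components, and both composition laws are encoded by the same exponential, this naturality is essentially automatic once Step~2 has been carried out carefully; it is just a matter of tracking how the $\hbar^{k-1}$ weights multiply when compositions are performed.

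The main obstacle, and the reason the author places the quotation marks around ``adjunction'' at the start of Section~5, is that arbitrary $\BVi$-algebras do not assemble into a genuine category under $\diamond$. To make the statement rigorous, I would restrict the codomain of the left adjoint (and hence the domain of the right adjoint) to the subcategory of pure $\BVi$-algebras with pure $\BVi$-morphisms, on which $\diamond$ is well defined and associative by the proposition preceding Theorem~\ref{equiv}. Theorem~\ref{equiv} guarantees that $S(-)[-1]$ lands in this subcategory, so no information is lost, and the hom-set bijection of the theorem restricts to give the desired adjunction with right adjoint $V \mapsto (V[[\hbar]][1], L_n)$.
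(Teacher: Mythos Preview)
Your outline follows the paper's own strategy closely: the corollary is meant to be immediate from the preceding theorem, and you correctly identify that the work consists of (i) assembling the two constructions into functors and (ii) reading the natural hom-set bijection as an adjunction. Your appeals to Theorem~\ref{equiv} for the left adjoint and to the two-step construction (Example~\ref{multiplication} plus the $\hbar$-rescaled conversion) for the right adjoint are exactly what the paper does.

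There is, however, a genuine slip in your last paragraph. You propose to restrict the target category to \emph{pure} $\BVi$-algebras with pure morphisms, but the corollary is stated for the larger category of $\BVi$-algebras which are merely \emph{free as graded commutative algebras}, with pure morphisms. This distinction matters: on the subcategory of pure $\BVi$-algebras the functor $\g \mapsto S(\g[-1])$ is already an equivalence by Theorem~\ref{equiv}, so the adjunction collapses to a triviality and the content of the corollary is lost. The proposition you cite (just before Theorem~\ref{equiv}) in fact shows that $\diamond$ is well defined and associative on the larger category of free $\BVi$-algebras with pure morphisms, not only on the pure ones; that is the category you should be working in. The right adjoint $V \mapsto (V[[\hbar]][1], L_n)$ is defined on this larger category via Steps~1 and~2, and the naturality from the theorem then gives the adjunction there. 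Once you replace ``pure $\BVi$-algebras'' by ``free $\BVi$-algebras'' in your final paragraph, your argument matches the paper's.
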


\begin{rem}
  This corollary generalizes the construction of a pair of adjoint
  functors by Beilinson and Drinfeld \cite[4.1.8]{beilinson-drinfeld}
  from the case of dg Lie and BV algebras to the case of $\Li$- and
  $\BVi$-algebras.
\end{rem}

\begin{rem}
  The results of this section extend easily to the case when an
  $\Li$-algebra $\g$ is replaced with a topological $\Li$-algebra
  $\g[[\hbar]]$ over $k[[\hbar]]$ and we use the $\BVi$ structure on
  $S(\g[-1])$ described in the remark at the end of
  Section~\ref{l-to-bv}. In particular, there is a natural bijection
\[
\Hom_{\BVi} (S(\g[-1]), V) \cong \Hom_{\Li} (\g[[\hbar]], V[[\hbar]][1]),
\]
where on the right-hand side, we consider continuous $\Li$-morphisms
over $k[[\hbar]]$.
\end{rem}

\bibliographystyle{amsalpha}
\bibliography{bvi,gmp}

\providecommand{\bysame}{\leavevmode\hbox to3em{\hrulefill}\thinspace}
\providecommand{\MR}{\relax\ifhmode\unskip\space\fi MR }
\providecommand{\MRhref}[2]{%
  \href{http://www.ams.org/mathscinet-getitem?mr=#1}{#2}
}
\providecommand{\href}[2]{#2}
\begin{thebibliography}{GCTV12}

\bibitem[AD96]{alfaro-damgaard}
J.~Alfaro and P.~H. Damgaard, \emph{Non-abelian antibrackets}, Phys. Lett. B
  \textbf{369} (1996), no.~3, 289--294.

\bibitem[Akm97]{akman:bv}
F.~Akman, \emph{On some generalizations of {B}atalin-{V}ilkovisky algebras}, J.
  Pure Appl.\ Algebra \textbf{120} (1997), no.~2, 105--141.

\bibitem[Akm00]{akman:master}
\bysame, \emph{A master identity for homotopy {G}erstenhaber algebras}, Comm.\
  Math.\ Phys. \textbf{209} (2000), no.~1, 51--76.

\bibitem[BBD97]{batalin-bering-damgaard}
I.~Batalin, K.~Bering, and P.~H. Damgaard, \emph{Second class constraints in a
  higher-order {L}agrangian formalism}, Phys. Lett. B \textbf{408} (1997),
  no.~1, 235--240.

\bibitem[BD04]{beilinson-drinfeld}
A.~Beilinson and V.~Drinfeld, \emph{Chiral algebras}, American Mathematical
  Society Colloquium Publications, vol.~51, American Mathematical Society,
  Providence, RI, 2004.

\bibitem[BDA96]{bering-damgaard-alfaro}
K.~Bering, P.~H. Damgaard, and J.~Alfaro, \emph{Algebra of higher
  antibrackets}, Nuclear Phys. B \textbf{478} (1996), no.~1, 459--503.

\bibitem[Ber07]{bering:ncbv}
K.~Bering, \emph{Non-commutative {B}atalin-{V}ilkovisky algebras, homotopy
  {L}ie algebras and the {C}ourant bracket}, Comm.\ Math.\ Phys. \textbf{274}
  (2007), no.~2, 297--341.

\bibitem[BK98]{bar-kon}
S.~Barannikov and M.~Kontsevich, \emph{Frobenius manifolds and formality of
  {L}ie algebras of polyvector fields}, Internat.\ Math.\ Res.\ Notices (1998),
  no.~4, 201--215.

\bibitem[BL09]{bering-lada:examples}
K.~Bering and T.~Lada, \emph{Examples of homotopy {L}ie algebras}, Arch.\
  Math.\ (Brno) \textbf{45} (2009), no.~4, 265--277.

\bibitem[BL13]{braun-lazarev}
C.~Braun and A.~Lazarev, \emph{Homotopy {BV} algebras in {P}oisson geometry},
  Trans.\ Moscow Math.\ Soc. \textbf{74} (2013), 217--227.

\bibitem[BM98]{batalin-marnelius:q}
I.~Batalin and R.~Marnelius, \emph{Quantum antibrackets}, Phys. Lett. B
  \textbf{434} (1998), no.~3, 312--320.

\bibitem[BM99a]{batalin-marnelius:d}
\bysame, \emph{Dualities between {P}oisson brackets and antibrackets},
  Internat. J. Modern Phys. A \textbf{14} (1999), no.~32, 5049--5073.

\bibitem[BM99b]{batalin-marnelius:g}
I.~A. Batalin and R.~Marnelius, \emph{General quantum antibrackets}, Theoret.
  and Math. Phys. \textbf{120} (1999), no.~3, 1115--1132.

\bibitem[B{\"o}r14]{boerjeson}
K.~B{\"o}rjeson, \emph{{$A_\infty$}-algebras derived from associative algebras
  with a non-derivation differential}, J. Gen. Lie Theory Appl. \textbf{8}
  (2014), no.~1, \texttt{arXiv:1304.6231 [math.QA]}.

\bibitem[CFL15]{cieliebak-fukaya-latschev}
K.~Cieliebak, K.~Fukaya, and J.~Latschev, \emph{Homological algebra related to
  surfaces with boundary}, Preprint, ArXiv e-prints, August 2015,
  \texttt{arXiv:1508.02741 [math.QA]}.

\bibitem[CG14]{costello-gwilliam}
K.~Costello and O.~Gwilliam, \emph{Factorization algebras in quantum field
  theory}, Preprint, Department of Mathematics, Northwestern University,
  Evaston, Illinois, April 2014,
  \texttt{http://math.northwestern.edu/\~{}costello/factorization.pdf}.

\bibitem[CL07]{cieliebak-latschev}
K.~Cieliebak and J.~Latschev, \emph{The role of string topology in symplectic
  field theory}, New perspectives and challenges in symplectic field theory,
  CRM Proc. Lecture Notes \textbf{49} (2007), 113--146.

\bibitem[CMW14]{campos-merkulov-willwacher}
R.~Campos, S.~Merkulov, and T.~Willwacher, \emph{The {F}robenius properad is
  {K}oszul}, {P}reprint, University of Zurich, February 2014,
  \texttt{arXiv:1402.4048 [math.QA]}.

\bibitem[DCTT10]{dc-t-t}
G.~C. Drummond-Cole, J.~Terilla, and T.~Tradler, \emph{Algebras over
  {$\Omega({\rm co}{\bf Frob})$}}, J. Homotopy Relat.\ Struct. \textbf{5}
  (2010), no.~1, 15--36.

\bibitem[DCV13]{dc-v}
G.~C. Drummond-Cole and B.~Vallette, \emph{The minimal model for the
  {B}atalin-{V}ilkovisky operad}, Selecta Math. (N.S.) \textbf{19} (2013),
  no.~1, 1--47.

\bibitem[FM07]{fiorenza-manetti}
D.~Fiorenza and M.~Manetti, \emph{{$L_\infty$} structures on mapping cones},
  Algebra Number Theory \textbf{1} (2007), no.~3, 301--330,
  \texttt{arXiv:math/0601312 [math.QA]}.

\bibitem[GCTV12]{galvez-carrillo-tonks-vallette}
I.~G{\'a}lvez-Carrillo, A.~Tonks, and B.~Vallette, \emph{Homotopy
  {B}atalin-{V}ilkovisky algebras}, J. Noncommut.\ Geom. \textbf{6} (2012),
  no.~3, 539--602.

\bibitem[Get94]{getzler:bv}
E.~Getzler, \emph{Batalin-{V}ilkovisky algebras and two-dimensional topological
  field theories}, Comm.\ Math.\ Phys. \textbf{159} (1994), no.~2, 265--285.

\bibitem[Get10]{getzler:hdb}
\bysame, \emph{Higher derived brackets}, {P}reprint, Northwestern University,
  October 2010, \texttt{arXiv:1010.5859 [math-ph]}.

\bibitem[GJ94]{getzler-jones}
E.~Getzler and J.~D. Jones, \emph{Operads, homotopy algebra and iterated
  integrals for double loop spaces}, {P}reprint, Northwestern University, March
  1994, \texttt{arXiv:hep-th/9403055}.

\bibitem[GP11]{SGA3}
P.~Gille and P.~Polo (eds.), \emph{Sch\'emas en groupes ({SGA} $3$). {T}ome
  {I}. {P}ropri\'et\'es g\'en\'erales des sch\'emas en groupes}, Documents
  Math\'ematiques (Paris), 7, Soci\'et\'e Math\'ematique de France, Paris,
  2011, S{\'e}minaire de G{\'e}om{\'e}trie Alg{\'e}brique du Bois Marie
  1962--64, A seminar directed by M.~Demazure and A.~Grothendieck with the
  collaboration of M.~Artin, J.-E. Bertin, P.~Gabriel, M.~Raynaud and J-P.
  Serre, Revised and annotated edition of the 1970 French original.

\bibitem[Gwi12]{gwilliam}
O.~Gwilliam, \emph{Factorization {A}lgebras and {F}ree {F}ield {T}heories},
  ProQuest LLC, Ann Arbor, MI, 2012, Thesis (Ph.D.)--Northwestern University.

\bibitem[Hue98]{huebschmann}
J.~Huebschmann, \emph{Lie-{R}inehart algebras, {G}erstenhaber algebras and
  {B}atalin-{V}ilkovisky algebras}, Ann. Inst. Fourier (Grenoble) \textbf{48}
  (1998), no.~2, 425--440.

\bibitem[Kos85]{koszul}
J.-L. Koszul, \emph{Crochet de {S}chouten-{N}ijenhuis et cohomologie},
  Ast\'erisque (1985), 257--271, The mathematical heritage of {\'E}lie Cartan
  (Lyon, 1984).

\bibitem[Kra00]{kravchenko:bv}
O.~Kravchenko, \emph{Deformations of {B}atalin-{V}ilkovisky algebras}, Poisson
  geometry ({W}arsaw, 1998), Banach Center Publ., vol.~51, Polish Acad. Sci.,
  Warsaw, 2000, pp.~131--139.

\bibitem[KS95]{ks:exact}
Y.~Kosmann-Schwarzbach, \emph{Exact {G}erstenhaber algebras and {L}ie
  bialgebroids}, Acta Appl. Math. \textbf{41} (1995), no.~1-3, 153--165,
  Geometric and algebraic structures in differential equations.

\bibitem[KS06]{kontsevich-soibelman}
M.~Kontsevich and Y.~Soibelman, \emph{Deformation theory. {I}. {D}raft of a
  book}, 2006.

\bibitem[KV08]{khudaverdian-voronov}
H.~M. Khudaverdian and T.~Voronov, \emph{Higher {P}oisson brackets and
  differential forms}, Geometric methods in physics, AIP Conf. Proc., vol.
  1079, Amer. Inst. Phys., Melville, NY, 2008, pp.~203--215.

\bibitem[Los07]{losev}
A.~Losev, \emph{From {B}erezin integral to {B}atalin-{V}ilkovisky formalism: a
  mathematical physicist's point of view}, Felix {B}erezin (M.~Shifman, ed.),
  World Scientific Publishing Co. Pte. Ltd., Hackensack, NJ, 2007, Life and
  death of the mastermind of supermathematics, pp.~3--30.

\bibitem[Man99]{manin:book}
Yu.~I. Manin, \emph{Frobenius manifolds, quantum cohomology, and moduli
  spaces}, vol.~47, American Mathematical Society, Providence, RI, 1999.

\bibitem[Mar01]{markl:loop}
M.~Markl, \emph{Loop homotopy algebras in closed string field theory}, Comm.\
  Math.\ Phys. \textbf{221} (2001), no.~2, 367--384.

\bibitem[Mar13]{markl:boerjeson}
\bysame, \emph{On the origin of higher braces and higher-order derivations},
  {P}reprint, Mathematical Institute of the Academy, Prague, The Czech
  Republic, September 2013, \texttt{arXiv:1309.7744 [math.KT]}.

\bibitem[MS13]{muenster-sachs}
K.~M{\"u}nster and I.~Sachs, \emph{Quantum open-closed homotopy algebra and
  string field theory}, Comm.\ Math.\ Phys. \textbf{321} (2013), no.~3,
  769--801, \texttt{arXiv:1109.4101 [hep-th]}.

\bibitem[MSS02]{markl-shnider-stasheff}
M.~Markl, S.~Shnider, and J.~Stasheff, \emph{Operads in algebra, topology and
  physics}, Mathematical Surveys and Monographs, vol.~96, American Mathematical
  Society, Providence, RI, 2002.

\bibitem[RW98]{roytenberg-weinstein}
D.~Roytenberg and A.~Weinstein, \emph{Courant algebroids and strongly homotopy
  {L}ie algebras}, Lett. Math. Phys. \textbf{46} (1998), no.~1, 81--93,
  \texttt{arXiv:math/9802118 [math.QA]}.

\bibitem[Sch93]{schwarz:bv}
A.~Schwarz, \emph{Geometry of {B}atalin-{V}ilkovisky quantization}, Comm. Math.
  Phys. \textbf{155} (1993), no.~2, 249--260.

\bibitem[Tam98]{tamarkin1998}
D.~E. Tamarkin, \emph{Another proof of {M.} {K}ontsevich formality theorem},
  {P}reprint, The Pennsylvania State University, March 1998,
  \texttt{arXiv:math/9803025 [math.QA]}.

\bibitem[Tam99]{tamarkin:thesis}
\bysame, \emph{Operadic proof of {M}. {K}ontsevich's formality theorem},
  ProQuest LLC, Ann Arbor, MI, 1999, Thesis (Ph.D.)--The Pennsylvania State
  University.

\bibitem[TT00]{tamarkin-tsygan}
D.~E. Tamarkin and B.~Tsygan, \emph{Noncommutative differential calculus,
  homotopy {BV} algebras and formality conjectures}, Methods Funct. Anal.
  Topology \textbf{6} (2000), no.~2, 85--100.

\bibitem[TTW11]{terilla-tradler-wilson}
J.~Terilla, T.~Tradler, and S.~O. Wilson, \emph{Homotopy {DG} algebras induce
  homotopy {BV} algebras}, J. Homotopy Relat. Struct. \textbf{6} (2011), no.~1,
  177--182.

\bibitem[Vit13]{vitagliano}
L.~Vitagliano, \emph{Representations of homotopy {L}ie-{R}inehart algebras},
  {P}reprint, Universit\`a degli Studi di Salerno, April 2013,
  \texttt{arXiv:1304.4353 [math.QA]}. To appear in Math.\ Proc.\ Cambridge
  Philos.\ Soc.

\bibitem[Vor00]{voronov:hga}
A.~A. Voronov, \emph{Homotopy {G}erstenhaber algebras}, Conf{\'e}rence
  Mosh{\'e} Flato 1999, Springer, 2000, pp.~307--331.

\bibitem[Vor05a]{tvoronov:hdb}
T.~Voronov, \emph{Higher derived brackets and homotopy algebras}, J. Pure
  Appl.\ Algebra \textbf{202} (2005), no.~1-3, 133--153.

\bibitem[Vor05b]{tvoronov:hdba}
\bysame, \emph{Higher derived brackets for arbitrary derivations}, Trav. Math.
  \textbf{16} (2005), 163--186.

\bibitem[Zwi95]{zwiebach:intro}
B.~Zwiebach, \emph{Closed-string field theory: an introduction}, Gravitation et
  quantifications ({L}es {H}ouches, 1992), North-Holland, Amsterdam, 1995,
  pp.~647--678.

\end{thebibliography}

\end{document}